\newcommand{\A}{\mathcal A}
\newcommand{\Z}{\mathbb Z}
\newcommand{\calZ}{\mathcal Z}
\newcommand{\R}{\mathbb R}
\newcommand{\calB}{\mathcal B}
\newcommand{\be}{\begin{equation}}
\newcommand{\ee}{\end{equation}}
\newcommand{\ds}{\displaystyle}
\newcommand{\citep}[1]{\cite{#1}}
\numberwithin{equation}{section}
\newtheorem{thm}{Theorem}[section]
\newtheorem{cor}[thm]{Corollary}
\newtheorem{lem}[thm]{Lemma}
\newtheorem{prop}[thm]{Proposition}
\newtheorem{defn}[thm]{Definition}
\newtheorem{rem}[thm]{Remark}
\newtheorem{ex}[thm]{Example}
\title{The Hilbert-Galton board}
\author{Arvind Ayyer}
\address{Arvind Ayyer, 
Department of Mathematics, 
Indian Institute of Science,
Bangalore - 560012, India}
\email{arvind@math.iisc.ernet.in}
\author{Sanjay Ramassamy}
\address{Sanjay Ramassamy,
Unit\'e de Math\'ematiques Pures et Appliqu\'ees,
\'Ecole normale sup\'erieure de Lyon,
46 all\'ee d'Italie,
69364 Lyon Cedex 07, France}
\email{sanjay.ramassamy@ens-lyon.fr}
\date{\today}  
\begin{document} 

\begin{abstract}
We introduce the Hilbert-Galton board as a variant of the classical Galton board. Balls fall into a row of bins at a rate depending on the bin, and at random times, each bin gets shifted one unit to the right and an empty bin is added to the left. We compute the stationary distribution of this Markov chain and show the existence of an enriched Markov chain on triangular arrays which projects down to the Hilbert-Galton board. We also define finite-ball projections of the Hilbert-Galton board, for which we compute the stationary distribution, the full spectrum and the grand coupling time.
\end{abstract}

\maketitle

\section{Introduction}
\label{sec:intro}
The Galton board is a mechanical device introduced by Galton~\cite{galton-1894} in order to illustrate the central limit theorem.
Balls fall through an array of regularly placed obstacles into a row of bins. At each obstacle, a ball may go left or right with the same probability, so that the abscissa of the bin into which the ball falls has a binomial distribution, which approximates the normal distribution when the number of obstacles and bins is large. See Figure~\ref{fig:galtonboard} for a small example.

\begin{figure}[htbp!]
\begin{center}
\includegraphics[scale=0.5]{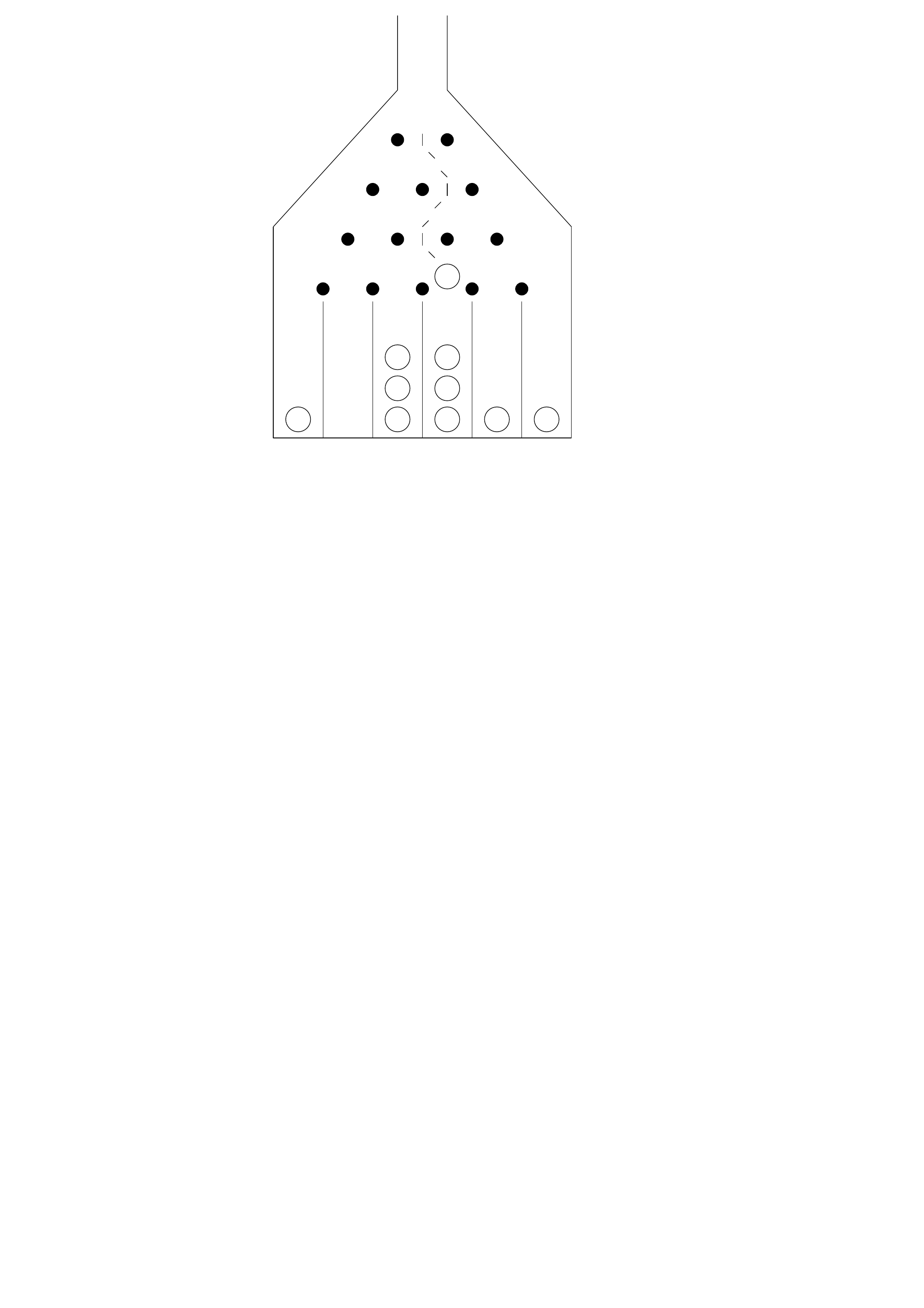}
\caption{An illustration of the Galton board, with obstacles marked by black dots.}
\label{fig:galtonboard}
\end{center}
\end{figure}

Forgetting about the mechanical obstacles, one can simply consider the Galton board as a Markov chain on configurations of balls inside bins, where the rate at which a ball falls into a given bin is a function of the position of the bin. 
We generalise this by considering arbitrary rates of falling inside each bin, and in addition a shift operation as follows.
Consider a row of $n$ bins, labelled from $1$ to $n$ from left to right. Balls fall into bin number $i$ at rate $x_i$ for any $1\leq i \leq n$. At rate $x_0$, a shift to the right occurs, which means that the following events take place simultaneously:
\begin{enumerate}
 \item the rightmost bin gets deleted ;
 \item every bin of index $1 \leq i \leq n-1$ gets shifted to the right by one unit, thus becomes indexed by $i+1$ ;
 \item an empty new bin is added at position $1$.
\end{enumerate}
When a bin gets shifted, the number of balls it contains remains unchanged. 
See Figure~\ref{fig:example-shift} for an example of this shift operation.
We denote this Markov chain by $Z^{(n)}$ and call it the \emph{Hilbert-Galton board}, as a reference to the parable of the Hilbert hotel (see~\cite{gamow-2012}).
This continuous-time Markov chain is irreducible and we compute its stationary distribution (Theorem~\ref{thm:constantbinprob}). We show the existence of a natural Markov chain $X^{(n)}$ on triangular arrays 
which projects down to the Hilbert-Galton board and for which we can compute the stationary distribution (Theorem~\ref{thm:triangle-chain-statioprob}). We call $X^{(n)}$ the \emph{enriched Hilbert-Galton board}.

\begin{figure}[htbp!]
\begin{center}
\includegraphics[scale=0.7]{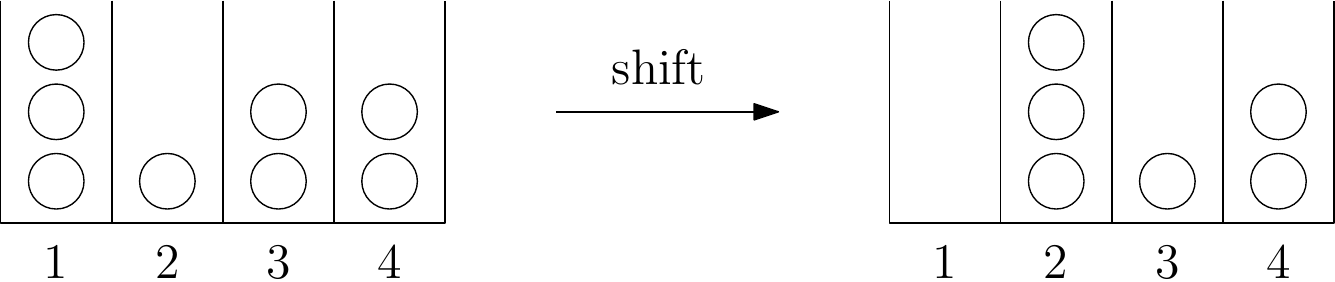}
\caption{An example of the shift transition on $Z^{(4)}$.}
\label{fig:example-shift}
\end{center}
\end{figure}

We also introduce a finite-ball projection of (a variant of) the Hilbert-Galton board. Let $\tilde{Z}^{(n)}$ be the Markov chain defined as the Hilbert-Galton board, with the only difference being that whenever a new bin is added during a shift operation, it arrives containing a single ball (rather than arriving empty). The Markov chain $Y^{(n)}$ is defined by looking at the evolution of the leftmost $n$ balls in $\tilde{Z}^{(n)}$. 
We call $Y^{(n)}$ the \emph{$n$-ball projection of the Hilbert-Galton board}.
One can reformulate it as follows. We consider configurations of $n$ balls placed inside a row of bins such that no bin is empty. For any $i\geq1$, at rate $x_{i-1}$ we add one ball to the left of the $i$'th bin 
and we delete the rightmost ball, so that the total number of balls remains constant. If $i=1$, we need to create a new leftmost bin where we add the new ball. If the rightmost bin becomes empty after deletion of the rightmost ball, we delete the bin.

With this formulation, the $n$-ball projection of the Hilbert-Galton board can be seen as a modification of the infinite-bin model introduced by Foss and Konstantopoulos~\cite{foss-konstantopoulos-2003}, where the new balls were added to the left of the $i$'th ball, rather than to the left of the $i$'th bin\footnote{To be precise, this is actually the finite-dimensional projection of the infinite-bin model, which may  involve configurations with infinitely many balls.}. A special instance of the infinite-bin model had already appeared in work of Aldous and Pitman~\cite{aldous1983asymptotic}. The infinite-bin model was initially defined to study a model of random directed acyclic graphs called the Barak-Erd\H{o}s model~\cite{barak-erdos-1984}, 
 but in recent years it has received quite a bit of attention as an interacting particle system. Properties of interest have been the speed of the front (rate of creation of new leftmost bins)~\cite{foss-konstantopoulos-2003,mallein-ramassamy-2016,aldous1983asymptotic} and the existence of a grand coupling~\cite{chernysh-ramassamy-2017}, which makes it a renewal process and allows perfect sampling of the stationary measure using coupling from the past~\cite{foss-konstantopoulos-2003,foss-zachary-2013}. The stationary probabilities for the infinite-bin model are rational functions of the rates $x_i$, but there is currently no general formula for them. The spectrum of the Markov chain is not known either.

By contrast, the $n$-ball projection of the Hilbert-Galton board is much more tractable. Although it is irreversible, 
 we derive formulas for the stationary probabilities (Theorem~\ref{thm:statioprob}) as well as for the spectrum of the chain (Theorem~\ref{thm:spectrum}). Perfect sampling using coupling from the past is also possible and we derive a formula for the grand coupling time (Proposition~\ref{prop:couplingtime}).
We note in passing that all the Markov chains studied in this work are irreversible. Very few families of irreversible Markov chains are known for which stationary distributions and spectra can be explicitly computed. One such class is that of $\mathscr{R}$-trivial Markov chains~ \cite{ayyer_schilling_steinberg_thiery.2013}. For these, the representation theory of monoids can be used to obtain the stationary distribution, the spectrum and the mixing time. This has been used effectively for sampling linear extension for posets using promotion~\cite{ayyer_klee_schilling.2012} and for understanding nonabelian sandpile models on trees~\citep{ayyer_schilling_steinberg_thiery.sandpile.2013}.

\subsection*{Organization of the paper}
In Section~\ref{sec:summary} we introduce the notation for triangular arrays and use it to state and discuss our main results. We compute in Section~\ref{sec:binmodel} the stationary distribution for the Hilbert-Galton board using two methods: first via a bi-infinite stationary version of the Markov chain, and then using a projection from the enriched Hilbert-Galton board. Finally, in Section~\ref{sec:finiteballprojection}, we focus on the $n$-ball projection of the Hilbert-Galton board. We compute its stationary distribution, its spectrum and give a formula for its grand coupling time.

\section{Statement and discussion of the results}
\label{sec:summary}

\subsection{The Hilbert-Galton board and its enrichment}

Fix an integer $n\geq1$. The state space of the Hilbert-Galton board $Z^{(n)}$ is the set $\Z_{\geq0}^n$, i.e. the set all the $n$-tuples of nonnegative integers $(c_1,\ldots,c_n)$. The integer $c_i$ is interpreted as the number of balls in the $i$'th bin. For any $0\leq j \leq n$, we define the map $U_j:\Z_{\geq0}^n\rightarrow \Z_{\geq0}^n$ as follows: for any $c=(c_1,\ldots,c_n)\in\Z_{\geq0}^n$, set
\[
U_j(c)=
\begin{cases}
(0,c_1,\dots,c_{n-1}) & \text{if $j=0$}, \\
(c_1,\dots,c_{j-1},c_j+1,c_{j+1},\dots,c_n) & \text{if $1 \leq j \leq n$}.
\end{cases}
\]
The Hilbert-Galton board $Z^{(n)}$ is defined to be the continuous-time Markov chain on $\Z_{\geq0}^n
$, where an $n$-tuple $c\in\Z_{\geq0}^n$ makes a transition to $U_j(c)$ at rate $x_j$ for any $0\leq j \leq n$. Unless otherwise stated, we will assume in the remainder of this article that $x_0>0$ and $x_1>0$. The general formula for the stationary probability of any state of $Z^{(n)}$ uses the notion of triangular arrays, which we now define.

\begin{defn}
\label{def:triangles}
For any $n\geq0$, a \emph{triangular array} of size $n$ is a collection of nonnegative integers $A=(A_{k,j})_{1 \leq j \leq k \leq n}$. We denote by $\A_{n}$ the set of all triangular arrays of size $n$. By convention, there exists a single triangular array of size $0$, denoted by $\epsilon$.
\end{defn}

We represent a triangular array by aligning each row to the left. For example, a triangular array $A$ of size $3$ is represented as 
\[
\begin{array}{cccc}
 A_{1,1} \\
 A_{2,1} & A_{2,2} \\
 A_{3,1} & A_{3,2} & A_{3,3}
\end{array}
\]
There are three natural ways to take partial sums of elements in a triangular array: horizontally, vertically or diagonally. For any integers $1 \leq i \leq n$ and for any array $A\in\A_{n}$, we define these sums,
\begin{equation}
\label{arraysums}
\begin{split}
h_i(A) &:= \sum_{j=1}^i A_{i,j}, \\
v_i(A) &:= \sum_{k=i}^{n} A_{k,i}, \\
d_i(A) &:= \sum_{j=1}^{i} A_{n -i +j,j}.
\end{split}
\end{equation}

See Figure~\ref{fig:example-triangle} for an illustration of the various sums in an array.

\begin{figure}[htbp!]
\begin{center}
\includegraphics[scale=0.5]{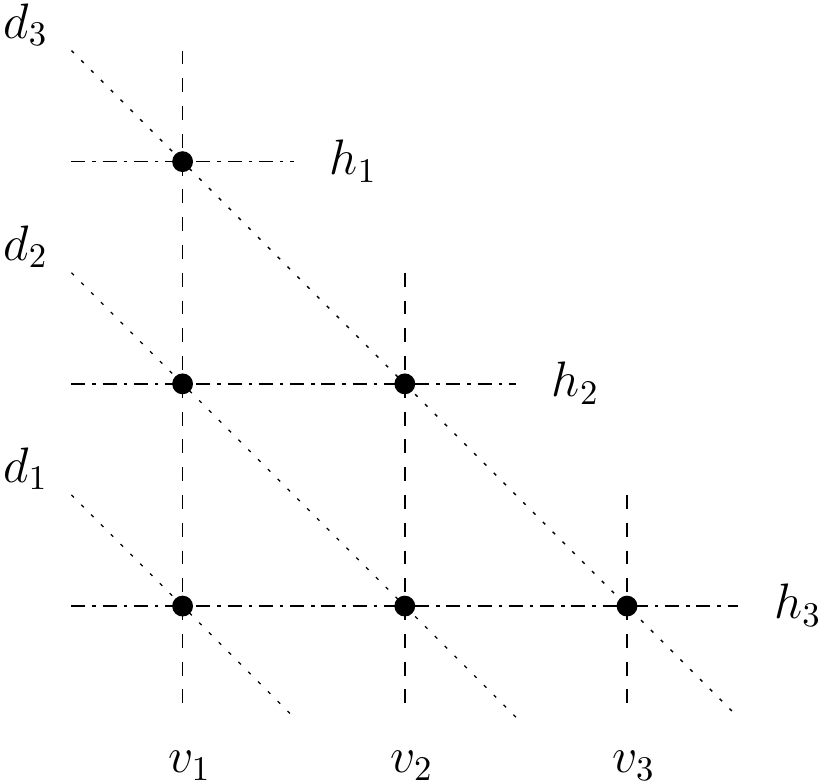}
\caption{An illustration of horizontal, vertical and diagonal sums in a triangular array of size 3.}
\label{fig:example-triangle}
\end{center}
\end{figure}

For any $n$-tuple of nonnegative integers $(c_1,\ldots,c_{n})$, we define
\[
H_{n}(c_1,\ldots,c_{n}):=\left \{A\in\A_n | \; h_i(A)=c_i \; \forall i \in \left[ 1,n \right] \right\}
\]
to be the set of triangular arrays of size $n$ with prescribed row sums. 
Recall that for any nonnegative integers $m_1,\ldots,m_k$, if we write $m=m_1+\cdots+m_k$, then the multinomial coefficient associated with the $k$-tuple $(m_1,\ldots,m_k)$ is defined by
\[
 \binom{m}{m_1, \dots,m_k}:=\frac{m!}{\prod_{i=1}^k m_i!} 
\]
Given a vector $x=(x_0,\ldots,x_{n})$ of nonnegative reals such that $x_0>0$, we define the weight function $w_{x}:\A_{n}\rightarrow \mathbb{R}_{\geq0}$ as follows. For any $A\in\A_{n}$,
\begin{equation}
\label{eq:wx}
 w_{x}(A):=x_0^{n}\prod\limits_{k=1}^{n} \binom{d_k(A)}{A_{n-k+1,1}, \dots, A_{n,k}} 
 \frac{x_k^{v_k(A)}}{y_k^{d_k(A)+1}},
 \end{equation}
 where $y_k:=x_0+x_1+\cdots+x_k$.
By an abuse of notation, if the vector $x$ has more than $n+1$ elements, we can still define the function $w_{x}$ on $\A_{n}$ by restricting the vector $x$ to its first $n+1$ elements. By convention, when $n=0$, we have $w_{x}(\epsilon)=1$. Using this notation, we can now express the stationary probabilities for the Hilbert-Galton board.

\begin{thm}
\label{thm:constantbinprob}
Fix $n\geq1$. Then the Hilbert-Galton board $Z^{(n)}$ is an irreducible positive recurrent Markov chain and the stationary probability of any state $(c_1,\ldots,c_n)\in\Z_{\geq0}^n$ is
\begin{equation}
\label{eq:completeconstantbinprob}
\pi_{Z^{(n)}}(c_1,\ldots,c_n)=\sum_{A\in H_n(c_1,\ldots,c_n)} w_{x}(A).
\end{equation}
More generally, for any $1\leq \ell \leq n$, the stationary probability that the $i$'th bin contains exactly $c_i$ balls for $1 \leq i \leq \ell$  is given by
\begin{equation}
\label{eq:constantbinprob}
\pi_{Z^{(n)}}(c_1,\ldots,c_\ell)=\sum_{A\in H_{\ell}(c_1,\ldots,c_{\ell})} w_{x}(A).
\end{equation}
\end{thm}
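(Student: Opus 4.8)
The plan is to realise the stationary distribution through an explicit probabilistic construction: a bi-infinite, time-stationary version of the dynamics, out of which the formula \eqref{eq:completeconstantbinprob} drops after a single conditional computation. I would run the shift process as a rate-$x_0$ Poisson process on all of $\R$, and independently, for each position $p\ge1$, a rate-$x_p$ Poisson process of ball arrivals into whichever bin currently occupies position $p$. Fix the observation time $0$ and let $\tau_0,\tau_1,\dots$ be the lengths of the inter-shift intervals preceding time $0$, listed from most recent to oldest; by the backward-recurrence property of a Poisson process these are i.i.d.\ $\mathrm{Exp}(x_0)$. The bin occupying position $i$ at time $0$ was created $i-1$ shifts ago, and during the interval of length $\tau_k$ it sat at position $i-k$ and collected balls at rate $x_{i-k}$, for $0\le k\le i-1$. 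This is exactly the data recorded by a triangular array: set $A_{i,j}$ to be the number of balls that the bin now at position $i$ received while it occupied position $j$, for $1\le j\le i\le n$. Then the row sums reproduce the occupation numbers, $h_i(A)=c_i$, so summing $w_x$ over $H_n(c_1,\dots,c_n)$ amounts to summing the probabilities of all arrays consistent with the observed configuration.

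The key computation is to identify the law of $A$. Conditionally on all the $\tau_k$, the entries $A_{i,j}$ are independent Poisson variables with means $x_j\tau_{i-j}$, since distinct pairs (position, interval) correspond to disjoint space-time regions of the arrival processes. The decisive observation is that two entries share the same interval precisely when $i-j=i'-j'$, i.e.\ when they lie on a common diagonal; moreover the diagonal $i-j=n-m$ carries exactly the entries summed by $d_m(A)$, over columns $j=1,\dots,m$ with arrival rates $x_1,\dots,x_m$. Grouping the factored conditional probability by diagonals, the contribution of diagonal $m$ is a product of Poisson weights whose $\tau_{n-m}$-dependence is $\tau^{d_m(A)}e^{-\tau(x_1+\dots+x_m)}$; multiplying by the $\mathrm{Exp}(x_0)$ density $x_0e^{-x_0\tau}$ and integrating produces $x_0\,d_m(A)!\,/\,y_m^{d_m(A)+1}$. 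Regrouping the factorials into the multinomial coefficient $\binom{d_m(A)}{A_{n-m+1,1},\dots,A_{n,m}}$, collecting one $x_0$ per diagonal into $x_0^n$, and noting that the total power of each $x_j$ is its column sum $v_j(A)$, I recover exactly $w_x(A)$ as in \eqref{eq:wx}. Because the $n$ diagonals use disjoint intervals, multiplying over $m=1,\dots,n$ is legitimate and $\mathbb{P}(A)=w_x(A)$.

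It then remains to assemble the theorem. Irreducibility is elementary: from any state, $n$ consecutive shifts (rate $x_0>0$) empty the board, and from the empty board any target $(c_1,\dots,c_n)$ is built by repeatedly adding the required number of balls to bin $1$ (rate $x_1>0$) and shifting, filling the bins from right to left. Since $w_x(A)=\mathbb{P}(A)$ for a genuine random array, we have $\sum_A w_x(A)=1$, so \eqref{eq:completeconstantbinprob} defines a probability measure; an irreducible chain carrying a stationary probability measure is positive recurrent, and this also identifies the constructed measure as the unique stationary distribution. The marginal statement \eqref{eq:constantbinprob} comes for free: the occupation numbers $c_1,\dots,c_\ell$ depend only on $\tau_0,\dots,\tau_{\ell-1}$ and on positions $1,\dots,\ell$, so their joint law is unchanged when $n$ is lowered to $\ell$, yielding the same formula with $H_\ell$ in place of $H_n$.

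The step I expect to be the genuine obstacle is the rigorous justification that the bi-infinite object is well defined and time-stationary, and that its time-$0$ marginal on positions $1,\dots,n$ really is invariant for $Z^{(n)}$. Well-definedness is mild, since each $c_i$ depends on only the finitely many intervals $\tau_0,\dots,\tau_{i-1}$ and is a.s.\ finite; invariance requires checking that the construction is stationary under time translation, for which the cleanest safeguard is to verify directly that the candidate measure satisfies the balance equations $\big(\sum_{j=0}^n x_j\big)\pi(c)=x_0\,\id{c_1=0}\sum_{m\ge0}\pi(c_2,\dots,c_n,m)+\sum_{j:\,c_j\ge1}x_j\,\pi(c-e_j)$, where $e_j$ is the $j$-th standard basis vector. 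The shift term is the only delicate one, and it is precisely the marginal identity \eqref{eq:constantbinprob} (summing out the last bin) that collapses $\sum_{m\ge0}\pi(c_2,\dots,c_n,m)$ to the $\ell=n-1$ marginal, making the balance equation close on itself.
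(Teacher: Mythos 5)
Your proposal is correct and follows essentially the same route as the paper's first proof: a bi-infinite stationary version driven by Poisson point processes, a triangular array recording the ball counts per (bin, epoch) pair, conditional independence of Poisson entries given the shift times, and integration over the i.i.d.\ $\mathrm{Exp}(x_0)$ inter-shift intervals, which yields exactly $w_x(A)$. The one step you flag as the genuine obstacle --- that the time-$0$ law of the bi-infinite version really is $\pi_{Z^{(n)}}$ --- is settled in the paper (Lemma~\ref{lem:biinfinitestationary}) not by verifying balance equations but by a coupling argument: once $n$ shifts have occurred in $(0,t)$, the chain started from an arbitrary state coincides almost surely with the bi-infinite version, so convergence to equilibrium (irreducibility and positive recurrence being proved beforehand by a direct return-time estimate, rather than deduced from the existence of an invariant probability) forces the constant-in-$t$ law of $\hat{Z}^{(n)}(t)$ to equal $\pi_{Z^{(n)}}$.
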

Since the stationary probabilities for $Z^{(n)}$ are expressed as sums over triangular arrays, it is natural to ask for the existence of an enriched Markov chain on triangular arrays which would project down to the Hilbert-Galton board. It turns out that such a Markov chain on $\A_n$ indeed exists.

We call this the enriched Hilbert-Galton board and denote it by $X^{(n)}$.
It acts on $\A_n$ as follows. 
For any $A=(A_{k,j})_{1 \leq j \leq k \leq n} \in \A_n$, there are $n+1$ transitions out of $A$:
\[
A \longrightarrow
\begin{cases}
\begin{array}{ccccc}
 0 \\
 A_{1,1} & 0 \\
 A_{2,1} & A_{2,2} & 0 \\
 \vdots & \vdots & \ddots & \ddots \\
 A_{n-1,1} & A_{n-1,2} & \dots & A_{n-1,n-1} &  0,
\end{array}
& \text{with rate $x_0$}, \\
\\
\begin{array}{ccccc}
 A_{1,1} \\
 A_{2,1} & A_{2,2} \\
  \vdots &  & \ddots \\
 A_{i,1} & \dots & \dots & A_{i,i}+1 \\
 \vdots &  &  & \hspace*{0.5cm}\ddots \\
 A_{n,1} &  \dots & \dots & \dots &  A_{n,n}.
\end{array}
& \text{with rate $x_i$ for $1 \leq i \leq n$}.
\end{cases}
\]
We have the following result for the chain $X^{(n)}$.
\begin{thm}
\label{thm:triangle-chain-statioprob}
Fix $n\geq1$ and assume that $x_i>0$ for all $0\leq i \leq n$. Then the Markov chain $X^{(n)}$ is irreducible and positive recurrent. Moreover, for any $A \in \A_n$ the stationary probability of $A$ is given by
\begin{equation}
\label{triangle-statprob}
\pi_{X^{(n)}}(A)= w_x(A).
\end{equation}
Furthermore, defining the map
\[
p_n:A\in\A_n\rightarrow (h_1(A),\ldots,h_n(A))\in\Z_{\geq0}^n,
\]
the projection $p_n(X^{(n)})$ is a Markov chain with the same law as $Z^{(n)}$.
\end{thm}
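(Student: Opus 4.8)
The proof splits into three tasks: irreducibility and positive recurrence of $X^{(n)}$, verification that $w_x$ is stationary, and the projection statement. Let me sketch each.

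First, irreducibility. I would show that from any $A \in \A_n$ one can reach the zero array $\mathbf{0}$ (all entries zero) and conversely. Applying the rate-$x_0$ shift $n$ times sends any $A$ to $\mathbf{0}$, since each shift pushes a fresh row of zeros down from the top and discards the bottom row; after $n$ shifts every original entry has been flushed out. Conversely, from $\mathbf{0}$ the diagonal-increment moves (rates $x_1,\dots,x_n$) together with shifts let me build up arbitrary entries, so all states communicate. Positive recurrence will follow once stationarity of the finite-mass measure $w_x$ is established, provided $\sum_A w_x(A) < \infty$; I expect this normalization to be finite because the $y_k^{-(d_k+1)}$ factors in~\eqref{eq:wx} give geometric decay in each diagonal sum, and I would verify the total mass equals $1$ (consistent with the $x_0^n$ prefactor and the $\binom{d_k}{\cdots} x_k^{v_k}/y_k^{d_k+1}$ structure summing telescopically).

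\medskip
Second, and this is the crux, I must check the master (balance) equations: for every $A$,
\[
\left(\sum_{j=0}^n x_j\right) w_x(A) = \sum_{B \to A} x_{j(B)}\, w_x(B),
\]
where the sum runs over states $B$ and rates $x_{j(B)}$ for transitions $B \to A$. The incoming transitions are of two types: a rate-$x_i$ move from the array $A$ with $A_{i,i}$ decreased by $1$ (when $A_{i,i}\geq 1$), and rate-$x_0$ moves from any $B$ whose shift yields $A$ --- these require $A$ to have a top entry $A_{1,1}=0$ and a zero right diagonal, and $B$ is recovered by shifting $A$ up and appending an arbitrary new bottom row. I would compute the ratio $w_x(U_i^{\triangle}(A))/w_x(A)$ for the diagonal increment (call $U_i^{\triangle}$ the rate-$x_i$ operator) purely from~\eqref{eq:wx}: incrementing $A_{i,i}$ changes exactly the factors indexed by the diagonal through position $(i,i)$, namely one $d_k$ and one $v_k$, producing a clean ratio in terms of the $x_k$ and $y_k$. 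The real labor is the $x_0$ term: summing $w_x(B)$ over all append-able bottom rows $B$ must collapse, via the multinomial identity $\sum \binom{d}{\cdots} = (\text{something})$ and the geometric factor $x_k/y_k$, into a single closed expression matching the remaining side. I expect this telescoping --- engineered precisely by the $y_k = x_0+\cdots+x_k$ denominators and the binomial coefficients --- to be where all the real content lives.

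\medskip
Third, the projection. Since $p_n$ only records row sums $h_i(A)=c_i$, I must show $p_n(X^{(n)})$ is Markov with the transition rates of $Z^{(n)}$. The key is a \emph{lumpability} check: the rate-$x_i$ diagonal increment changes $h_i$ by $+1$ and fixes all other $h_j$, exactly realizing $U_i$ on the row-sum vector for $1\leq i\leq n$; and the rate-$x_0$ shift sends $(h_1,\dots,h_n)\mapsto(0,h_1,\dots,h_{n-1})$, exactly $U_0$. Thus for each fiber $p_n^{-1}(c)$ the aggregated rate into any target fiber is constant across the fiber, which is precisely the Kemeny--Snell criterion for strong lumpability; the lumped chain then coincides with $Z^{(n)}$ by inspection of the rates. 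Finally, that the stationary law pushes forward correctly, $\sum_{A\in H_n(c)} w_x(A) = \pi_{Z^{(n)}}(c)$, is exactly~\eqref{eq:completeconstantbinprob} of Theorem~\ref{thm:constantbinprob}, so consistency is automatic. The main obstacle throughout remains the $x_0$ balance computation in the second step.
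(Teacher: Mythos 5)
Your plan follows essentially the same route as the paper's proof: irreducibility via explicit shift/increment paths, verification that $w_x$ is a normalized stationary measure through the master equation --- where the rate-$x_i$ incoming term gives the clean ratio $x_i w_x(A^{(i)}) = y_n w_x(A) A_{i,i}/d_n(A)$ and the rate-$x_0$ incoming sum over appended bottom rows collapses via the generalized binomial theorem with $z = x_k/y_k$, which are precisely the paper's Lemma~\ref{lem:identity} and Lemma~\ref{lem:stat prob} --- and the projection statement by observing that each transition of $X^{(n)}$ induces the corresponding $U_j$ on row sums uniformly over each fiber (strong lumpability). The only cosmetic difference is that you deduce positive recurrence from the existence of the stationary probability measure (valid here since the total jump rate $y_n$ is constant, hence the chain is non-explosive) whereas the paper gives a direct geometric return-time bound; the two computations you defer are exactly where the paper's real work lies, and the mechanisms you describe for them are the correct ones.
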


Theorem~\ref{thm:constantbinprob} follows immediately as a corollary of Theorem~\ref{thm:triangle-chain-statioprob}. The proofs of these results are given in Section~\ref{sec:binmodel}.

\subsection{The finite-ball projection}

We now move on to the $n$-ball projection of the Hilbert-Galton board. 
For any $n\geq1$, the state space of $Y^{(n)}$ is the set $C_n$ of all \emph{compositions} of the integer $n$, i.e the set of all $\ell$-tuples $(c_1,\ldots,c_\ell)$ of positive integers, with $1\leq \ell \leq n$. The integer $\ell$ is called the \emph{length} of the composition $(c_1,\ldots,c_\ell)$. It is well-known that $\#C_n = 2^{n-1}$. 
For any integer $0 \leq j \leq n-1$, define the map $T_j:C_n \rightarrow C_n$ as follows: for any composition $c = (c_1,\dots,c_{\ell}) \in C_n$, we set
\[
T_j(c)=
\begin{cases}
(1,c_1,\dots,c_{\ell-1},c_{\ell}-1) & \text{if $j=0$}, \\
(c_1,\dots,c_{j-1},c_j+1,c_{j+1},\dots,c_{\ell-1},c_{\ell}-1) & \text{if $1 \leq j \leq \ell -1$}, \\
c & \text{if $j \geq \ell$},
\end{cases}
\]
where the last part should be deleted in the first two cases if $c_\ell = 1$.
The chain $Y^{(n)}(t)$ is defined as the continuous-time Markov chain on $C_n$, where a composition $c\in C_n$ makes a transition to $T_j(c)$ at rate $x_j$ for any $0\leq j \leq n-1$. From the stationary distribution of $Z^{(n)}$ one can deduce the stationary distribution of $Y^{(n)}$.

\begin{thm}
\label{thm:statioprob}
The $n$-ball projection of the Hilbert-Galton board $Y^{(n)}$ is irreducible. Furthermore, fix $c=(c_1,\ldots,c_{\ell})\in C_n$ and set $\gamma_i=c_i-1$ for any $1 \leq i \leq \ell$. Then we have the following two equivalent expressions for the stationary probability of $c$ for $Y^{(n)}$:
\begin{align}
\pi_{Y^{(n)}}(c) &= \sum_{A\in H_{\ell-1}(\gamma_1,\ldots,\gamma_{\ell-1})} w_{x}(A) - \sum_{s=0}^{\gamma_{\ell}-1}\sum_{A\in H_{\ell}(\gamma_1,\ldots,\gamma_{\ell-1},s)} w_{x}(A) \label{eq:statioprob1} \\
 &=  \sum_{s=\gamma_{\ell}}^{\infty}\sum_{A\in H_{\ell}(\gamma_1,\ldots,\gamma_{\ell-1},s)} w_{x}(A) \label{eq:statioprob2}
\end{align}
\end{thm}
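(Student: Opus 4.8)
The plan is to realise $Y^{(n)}$ as a strong lumping of the shifted bin chain $\tilde{Z}^{(n)}$ and then to push the stationary law of $\tilde{Z}^{(n)}$ forward through the projection. The first observation is that $\tilde{Z}^{(n)}$ is isomorphic to $Z^{(n)}$: since every bin of $\tilde{Z}^{(n)}$ is born with one ball and only ever gains balls before being deleted, all its coordinates are at least $1$, and subtracting $1$ from each bin intertwines the dynamics of $\tilde{Z}^{(n)}$ with those of $Z^{(n)}$ (a fall into bin $i$ becomes a fall into bin $i$, and a shift inserting a one-ball bin becomes a shift inserting an empty bin). Hence $\tilde{Z}^{(n)}$ is irreducible and positive recurrent with $\pi_{\tilde{Z}^{(n)}}(\tilde c_1,\dots,\tilde c_n)=\pi_{Z^{(n)}}(\tilde c_1-1,\dots,\tilde c_n-1)$, and the same relation holds between their marginals on the first $\ell$ bins for every $\ell\le n$.

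Next I would verify that the map $\phi$ sending a state $(\tilde c_1,\dots,\tilde c_n)$ of $\tilde{Z}^{(n)}$ to the composition recorded by its leftmost $n$ balls is a strong lumping onto $Y^{(n)}$. Writing $\ell$ for the index with $\tilde c_1+\cdots+\tilde c_{\ell-1}<n\le \tilde c_1+\cdots+\tilde c_\ell$, the leftmost $n$ balls form the composition $(\tilde c_1,\dots,\tilde c_{\ell-1},n-\sum_{i<\ell}\tilde c_i)$. One then checks transition by transition that each elementary move of $\tilde{Z}^{(n)}$ induces, at the same rate and independently of the representative in a fibre of $\phi$, exactly one of the moves $T_j$: a fall into bin $i<\ell$ gives $T_i$ (pushing one ball out of bin $\ell$), a fall into a bin of index $\ge\ell$ leaves the leftmost $n$ balls unchanged and gives the identity $T_j$ with $j\ge\ell$, and a shift gives $T_0$. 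The boundary cases where the last part has size one, so that bin $\ell$ drops out, must be handled explicitly and matched with the deletion convention in the definition of $T_0$ and $T_i$. Since $\phi$ is surjective --- every composition $(c_1,\dots,c_\ell)$ is the image of $(c_1,\dots,c_\ell,1,\dots,1)$ --- and $\tilde{Z}^{(n)}$ is irreducible, $Y^{(n)}$ is irreducible, and by strong lumpability its unique stationary distribution is the pushforward $\pi_{Y^{(n)}}(c)=\sum_{\phi(\tilde c)=c}\pi_{\tilde{Z}^{(n)}}(\tilde c)$.

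It then remains to evaluate this pushforward for $c=(c_1,\dots,c_\ell)$. The fibre $\phi^{-1}(c)$ consists of the states with $\tilde c_i=c_i$ for $i<\ell$, with $\tilde c_\ell\ge c_\ell$, and with $\tilde c_{\ell+1},\dots,\tilde c_n$ arbitrary (each at least $1$). Summing out the bins of index greater than $\ell$ turns $\pi_{\tilde{Z}^{(n)}}$ into its marginal on the first $\ell$ bins, so that $\pi_{Y^{(n)}}(c)=\sum_{\tilde c_\ell\ge c_\ell}\pi_{\tilde{Z}^{(n)}}^{\,(\ell)}(c_1,\dots,c_{\ell-1},\tilde c_\ell)$, where the superscript denotes the marginal. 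Using the isomorphism of the first step with $\gamma_i=c_i-1$ and the substitution $s=\tilde c_\ell-1$, this marginal becomes the marginal of $Z^{(n)}$ at $(\gamma_1,\dots,\gamma_{\ell-1},s)$, which by \eqref{eq:constantbinprob} equals $\sum_{A\in H_\ell(\gamma_1,\dots,\gamma_{\ell-1},s)}w_x(A)$; summing over $s\ge\gamma_\ell$ gives \eqref{eq:statioprob2}. Finally, \eqref{eq:statioprob1} follows from \eqref{eq:statioprob2} by the marginalisation identity $\sum_{s=0}^{\infty}\sum_{A\in H_\ell(\gamma_1,\dots,\gamma_{\ell-1},s)}w_x(A)=\sum_{A\in H_{\ell-1}(\gamma_1,\dots,\gamma_{\ell-1})}w_x(A)$, itself a consequence of Theorem~\ref{thm:constantbinprob} since both sides are the stationary probability that the first $\ell-1$ bins of $Z^{(n)}$ contain $\gamma_1,\dots,\gamma_{\ell-1}$; subtracting from the full sum the terms $0\le s\le\gamma_\ell-1$ yields the displayed difference.

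The main obstacle is the second step: establishing that $\phi$ is genuinely a strong lumping, i.e. that the induced rates depend only on the image composition and not on the chosen preimage, with the boundary cases (a last part equal to $1$, and falls into bins of index $\ge\ell$ that leave the projection inert) matching the piecewise definition of the maps $T_j$ exactly. Once this is in place, the remaining computation is only bookkeeping, relying on the already-established marginal formula \eqref{eq:constantbinprob} and on the convergence of the sums guaranteed by positive recurrence.
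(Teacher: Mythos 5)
Your proposal is correct and follows essentially the same route as the paper: realise $Y^{(n)}$ as the projection (via adding one ball per bin and truncating to the leftmost $n$ balls, the paper's maps $f_n$ and $g_n$) of the chain $Z^{(n)}$, push the stationary law forward, and evaluate the fibre sum $\pi_{Z^{(n)}}(Z^{(n)}_1=\gamma_1,\ldots,Z^{(n)}_{\ell-1}=\gamma_{\ell-1},Z^{(n)}_{\ell}\geq\gamma_{\ell})$ using formula~\eqref{eq:constantbinprob}. The only cosmetic differences are that you derive irreducibility from the projection rather than by the paper's direct $T_0,T_1$ argument, and you spell out the lumping verification that the paper asserts when stating that $\tilde{Y}^{(n)}$ has the same law as $Y^{(n)}$.
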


In formula~\eqref{eq:statioprob1}, there are a finite number of terms, but some are present with a minus sign. In formula~\eqref{eq:statioprob2} all terms have positive coefficients, but there are an infinite number of terms. In this case too, it is possible to construct a Markov chain on triangular arrays which projects down to $Y^{(n)}$. 
The enriched chain is similar to $X^{(n)}$ with a result similar to that of Theorem~\ref{thm:triangle-chain-statioprob}, but the case analysis is a little more complicated. 

Let us illustrate the computation of the stationary distribution of $Y^{(n)}$ by an example.

\begin{ex}
\label{ex:statioprob}
Consider the composition $c=(2,3)$. In this case, $\ell=2$, $\gamma_1=1$ and $\gamma_2=2$. Let us compute $\pi_{Y^{(5)}}(c)$ using formula~\eqref{eq:statioprob1}. The first sum is over a single element, the array of size $1$ made up of the number $1$. The second sum runs over the following three arrays of size $2$:
\[
\begin{array}{cccc}
 1 \\
 0 & 0
\end{array},
\qquad
\begin{array}{cccc}
 1 \\
 1 & 0
\end{array},
\qquad
\begin{array}{cccc}
 1 \\
 0 & 1
\end{array}.
\]
Hence
\begin{align*}
\pi_{Y^{(n)}}(2,3) = & \frac{x_0x_1}{y_1^2}-\frac{x_0^2x_1}{y_1y_2^2}-\frac{x_0^2x_1^2}{y_1^2y_2^2}-2\frac{x_0^2x_1x_2}{y_1y_2^3} \\
= & \frac{x_0 x_1 \left(x_1^3+x_0 x_1^2+3 x_1^2 x_2 +3 x_1 x_2^2 +2 x_0 x_1 x_2 +x_2^3+3 x_0 x_2^2\right)}{y_1^2 y_2^3}.
\end{align*}
\end{ex}

One can deduce from the stationary distributions of the chains $Z^{(n)}$ and $Y^{(n)}$ a series of properties of the $n$-ball projection of the Hilbert-Galton board. Since $Y^{(n)}$ is a finite state chain, the stationary probabilities are multivariate rational functions of
the indeterminates $x_1,\dots,x_n$.
We define the \emph{partition function} $\calZ(Y^{(n)})$ to be the least common multiple of the denominators in the stationary probabilities. Then we have the following results.

\begin{cor}
\label{cor:partitionfunction}
The partition function of the $n$-ball projection of the Hilbert-Galton board is given by
\[
\calZ(Y^{(n)}) = \prod_{k=1}^{n-1} y_k^{n-k}.
\]
\end{cor}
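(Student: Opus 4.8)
The plan is to determine, for each index $k$, the exact power of the irreducible linear polynomial $y_k = x_0 + \cdots + x_k$ in the reduced denominators of the stationary probabilities, and then assemble the least common multiple. Since the denominator of each weight $w_x(A)$ in \eqref{eq:wx} for $A\in\A_m$ is $\prod_{k=1}^m y_k^{d_k(A)+1}$, and since by Theorem~\ref{thm:statioprob} every $\pi_{Y^{(n)}}(c)$ is a finite $\Z$-combination of such weights via \eqref{eq:statioprob1}, the only prime factors that can occur are the $y_k$. These are pairwise coprime, since for $j<k$ one has $y_k - y_j = x_{j+1}+\cdots+x_k \neq 0$. Hence $\calZ(Y^{(n)}) = \prod_k y_k^{e_k}$, where $e_k$ is the largest power of $y_k$ occurring in any reduced denominator, and it suffices to prove $e_k = n-k$ for $1\le k\le n-1$ and $e_k = 0$ otherwise.

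For the upper bound I would use the elementary inequality that, for any $A\in\A_m$, each entry on the $k$-th diagonal is bounded by its row sum, whence $d_k(A) \le \sum_{r=m-k+1}^m h_r(A)$. Applying this to the two families of arrays appearing in \eqref{eq:statioprob1} for a composition $c$ of length $\ell$ — the size-$(\ell-1)$ arrays with row sums $(\gamma_1,\ldots,\gamma_{\ell-1})$, and the size-$\ell$ arrays with row sums $(\gamma_1,\ldots,\gamma_{\ell-1},s)$ for $s\le\gamma_\ell-1$ — together with the constraint $\gamma_1+\cdots+\gamma_\ell = n-\ell$, a short case analysis shows that the exponent $d_k(A)+1$ of $y_k$ never exceeds $n-k$. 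In particular $y_n$ never appears, since $\ell=n$ forces the composition $(1,\ldots,1)$, a vanishing second sum, and arrays of size only $n-1$. Thus $\prod_{k=1}^{n-1} y_k^{n-k}$ is a common denominator for all the $\pi_{Y^{(n)}}(c)$, giving $e_k \le n-k$.

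For the matching lower bound I would exhibit, for each $k\in\{1,\ldots,n-1\}$, the composition $c=(1,\ldots,1,n-k+1)$ of length $\ell=k$, so that $\gamma_1=\cdots=\gamma_{k-1}=0$ and $\gamma_k=n-k$. Here the positive-term expression \eqref{eq:statioprob2} is the convenient one: every array in $H_k(0,\ldots,0,s)$ has its top $k-1$ rows equal to zero, its weight collapses (all multinomials equal $1$) to $\frac{x_0^k}{\prod_{i\le k} y_i}\prod_{i=1}^k (x_i/y_i)^{A_{k,i}}$, and summing the resulting geometric series over $\sum_i A_{k,i}\ge n-k$ yields a closed form of the shape $\frac{x_0^{k-1}}{\prod_{i<k} y_i} - \frac{x_0^k}{\prod_{i\le k} y_i}\sum_{a_1+\cdots+a_k \le n-k-1}\prod_{i=1}^k (x_i/y_i)^{a_i}$. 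I would then read off the $y_k$-adic valuation: the single term $a=(0,\ldots,0,n-k-1)$ contributes valuation $-(n-k)$, strictly smaller than that of every other term, so by the ultrametric property of the valuation no cancellation can occur and $y_k^{n-k}$ survives in the reduced denominator. This gives $e_k\ge n-k$, and combined with the upper bound yields $e_k = n-k$ and hence $\calZ(Y^{(n)}) = \prod_{k=1}^{n-1} y_k^{n-k}$.

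The main obstacle is precisely this non-cancellation point in the lower bound: a priori the many terms making up $\pi_{Y^{(n)}}(c)$ could conspire to lower the power of $y_k$ below $n-k$. Organising the computation through the geometric-series closed form and isolating a unique term of minimal $y_k$-valuation is what makes the argument rigorous; the upper bound, by contrast, is a routine consequence of the diagonal-entry inequality together with the linear constraint $\gamma_1+\cdots+\gamma_\ell = n-\ell$.
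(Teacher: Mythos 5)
Your proof is correct, and while your upper bound coincides with the paper's, your lower bound takes a genuinely different route. For the upper bound, both you and the paper bound the diagonal sums $d_k(A)$ of the arrays appearing in \eqref{eq:statioprob1} by the relevant row sums and use the constraint $\gamma_1+\cdots+\gamma_\ell=n-\ell$ to get $d_k(A)+1\leq n-k$; this part is essentially identical. For the lower bound, the paper invokes Corollary~\ref{cor:monomialproba}(1): the composition $c_{k+1}(n-k)=(n-k,1,\dots,1)$, of length $k+1$ with the large part \emph{first}, has stationary probability equal to a single monomial with $y_k^{n-k}$ in the denominator, so no cancellation can occur and $e_k\geq n-k$ follows at once. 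You instead take the witness $(1,\dots,1,n-k+1)$, of length $k$ with the large part \emph{last}, exploit the fact that the relevant arrays have all rows but the bottom one equal to zero (so the weights collapse to monomials with trivial multinomial coefficients), sum the geometric series, and rule out cancellation by isolating the unique term of minimal $y_k$-adic valuation. The trade-off: the paper's argument is shorter, but it rests on Corollary~\ref{cor:monomialproba}, whose proof the paper omits (``a careful but elementary analysis of formula~\eqref{eq:statioprob1}''); your argument is fully self-contained, makes the non-cancellation issue explicit via the valuation, and additionally spells out why $\calZ(Y^{(n)})$ must be a product of powers of the $y_k$ (pairwise coprimality of these linear forms), a point the paper leaves implicit. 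One small remark: your detour through \eqref{eq:statioprob2} and the full geometric series is unnecessary, since for your witness composition formula \eqref{eq:statioprob1} already produces exactly the closed form you derive (the first sum is over the single zero array of size $k-1$); also, your coprimality justification should say that $y_j$ and $y_k$ are distinct irreducible linear forms with equal coefficient on $x_0$, hence not proportional, rather than merely that $y_k-y_j\neq 0$.
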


One can also compute the stationary distribution for $Y^{(n)}$ of the number of balls in the leftmost bin.

\begin{cor}
\label{cor:leftmostbin}
For the $n$-ball projection of the Hilbert-Galton board $Y^{(n)}$, the probability that the first bin contains $j$ balls at stationarity is given by
\[
\begin{cases}
\ds \frac{x_0 x_1^{j-1}}{y_1^j} & \text{if $j<n$}, \\[0.4cm]
\ds \left(\frac{x_1}{y_1}\right)^{n-1} & \text{if $j=n$}.
\end{cases}
\]
\end{cor}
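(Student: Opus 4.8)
The plan is to prove the corollary by a strong lumpability argument, which sidesteps the explicit stationary probabilities of Theorem~\ref{thm:statioprob} and reduces the computation to a one-dimensional chain. Write a state of $Y^{(n)}$ as a composition $c=(c_1,\dots,c_\ell)$ and consider the statistic $c_1$, the number of balls in the leftmost bin. I would first show that the partition of $C_n$ into the fibres $\{c : c_1=j\}$, $1\le j\le n$, is a strong lumping for $Y^{(n)}$. This follows from inspecting the maps $T_j$: the reset $T_0$ always produces a composition with first part $1$; the map $T_1$ produces a composition with first part $c_1+1$ whenever $\ell\ge2$—equivalently whenever $c_1<n$—while for the single-bin composition $(n)$ one has $c_1=n$, $\ell=1$, so $1\ge\ell$ and $T_1$ acts trivially; and every $T_j$ with $j\ge2$ leaves $c_1$ unchanged. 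Consequently, from any state with $c_1=j$ the total rate into the fibre $\{c_1=1\}$ is exactly $x_0$, and, when $j<n$, the total rate into $\{c_1=j+1\}$ is exactly $x_1$, independently of the remaining parts $c_2,\dots,c_\ell$. This is precisely the condition for strong lumpability.

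Granting this, the image process is a continuous-time Markov chain $B$ on $\{1,\dots,n\}$ whose only transitions are $j\to j+1$ at rate $x_1$ for $1\le j\le n-1$ and $j\to1$ at rate $x_0$ for $2\le j\le n$ (the reset out of state $1$ being a self-loop, which I discard). This chain is irreducible, and since $Y^{(n)}$ is irreducible (Theorem~\ref{thm:statioprob}) its stationary distribution $\pi_{Y^{(n)}}$ pushes forward to the unique stationary distribution $\mu=(\mu_1,\dots,\mu_n)$ of $B$; thus $\mu_j$ is exactly the sought probability that the leftmost bin holds $j$ balls. I would then solve the balance equations: the equation at each interior state $2\le k\le n-1$ reads $y_1\mu_k=x_1\mu_{k-1}$, giving $\mu_k=(x_1/y_1)^{k-1}\mu_1$ for $1\le k\le n-1$, while the equation at the top state reads $x_0\mu_n=x_1\mu_{n-1}$. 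Imposing $\sum_{k=1}^n\mu_k=1$, the geometric sum telescopes to $\mu_1/(1-x_1/y_1)=1$, so that $\mu_1=x_0/y_1$; substituting back gives $\mu_j=x_0x_1^{j-1}/y_1^{j}$ for $j<n$ and $\mu_n=(x_1/y_1)^{n-1}$, as claimed.

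The only step requiring genuine care—and the main obstacle—is the verification of strong lumpability at the boundary: one must check that when the leftmost bin already contains all $n$ balls, the increment map $T_1$ does nothing, so that the state $c_1=n$ can only leave via a reset. Away from this boundary the case analysis on the $T_j$ is routine. As an independent check, one may instead sum the explicit formulas of Theorem~\ref{thm:statioprob}. The case $j=n$ is immediate from \eqref{eq:statioprob2}, since $(n)$ is the unique composition with $c_1=n$; here $\gamma_1=n-1$ and each inner sum is the single size-one weight $x_0x_1^{s}/y_1^{s+1}$, so $\pi_{Y^{(n)}}((n))=\sum_{s\ge n-1}x_0x_1^{s}/y_1^{s+1}=(x_1/y_1)^{n-1}$. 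The cases $j<n$ would then require summing \eqref{eq:statioprob1} or \eqref{eq:statioprob2} over all compositions of $n$ with first part $j$; this is correct but combinatorially heavier, which is exactly why I would favour the lumping argument.
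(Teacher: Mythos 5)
Your proof is correct, but it takes a genuinely different route from the one the paper intends. The paper omits the proof, saying only that the corollary ``easily follows from the coupling of $Y^{(n)}$ with $Z^{(n)}$'': under that coupling (via $g_n\circ f_n$) the first bin of $Y^{(n)}$ contains $j$ balls precisely when the first bin of $Z^{(n)}$ contains $j-1$ balls (for $j<n$), respectively at least $n-1$ balls (for $j=n$), so the claim is a one-line specialization of the $\ell=1$ case of formula~\eqref{eq:constantbinprob}, where the unique size-one array $A_{1,1}=c_1$ has weight $x_0x_1^{c_1}/y_1^{c_1+1}$, followed by a geometric sum for $j=n$; this is essentially the computation you relegate to your ``independent check''. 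Your main argument instead establishes strong lumpability of $Y^{(n)}$ under $c\mapsto c_1$, and it is sound: the case analysis of the $T_j$ is complete (in particular the boundary check that $T_1$ fixes $(n)$ because $1\geq\ell$, and the equivalence $\ell\geq2\iff c_1<n$), within-fibre transitions are irrelevant for the lumped generator so discarding the self-loop at $1$ is legitimate, the lumped chain on $\{1,\dots,n\}$ with rates $j\to j+1$ at $x_1$ and $j\to1$ at $x_0$ is irreducible, and its balance equations together with normalization give exactly the stated distribution (your identity $\sum_k\mu_k=\mu_1/(1-x_1/y_1)$ does hold, since the $\mu_n$ term exactly completes the geometric series). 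What your approach buys is independence from the triangular-array machinery: it uses only the transition maps and irreducibility, not Theorem~\ref{thm:constantbinprob} or Theorem~\ref{thm:statioprob}, and it explains structurally why the answer is a truncated geometric law. What the paper's approach buys is brevity and scope: once~\eqref{eq:constantbinprob} is available the computation is immediate, and the same coupling yields the joint law of the first $\ell$ bins, whereas your one-dimensional lumping only sees the first-bin marginal (recovering joint laws would require a finer lumping).
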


As a last application of Theorem~\ref{thm:statioprob}, we identify some instances when the stationary probability is given by a single monomial. For any $\alpha\geq1$ and $\ell \geq 2$, denote by $c_{\ell}(\alpha)$ the composition of length $\ell$, whose first part equals $\alpha$ and whose other parts are all equal to $1$. For example, $c_3(5)=(5,1,1)$. Then the following holds.

\begin{cor}
\label{cor:monomialproba}
\mbox{}
\begin{enumerate}
 \item For any $\alpha\geq1$ and $\ell \geq 2$, we have
 \[
 \pi_{Y^{(n)}}(c_{\ell}(\alpha))=\frac{x_0^{\ell-1}x_1^{\alpha-1}}{y_{\ell-1}^{\alpha}\prod_{k=1}^{\ell-2}y_k}.
 \]
 \item When $x_2 = \cdots = x_{n-1} = 0$, the stationary probability of a composition $(c_1,\ldots,c_\ell)$ only depends on $n$ and $\ell$ and is given by
\[
\frac{x_0^{\ell-1} x_1^{n-\ell}}{y_1^{n-1}}.
\]
\end{enumerate}
\end{cor}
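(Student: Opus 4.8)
The plan is to read off both identities directly from Theorem~\ref{thm:statioprob}, after identifying which triangular arrays actually contribute to the relevant sums.

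For part~(1), I set $c = c_\ell(\alpha)$, so that $\gamma_1 = \alpha-1$ and $\gamma_2 = \cdots = \gamma_\ell = 0$. Since $\gamma_\ell = 0$, the subtracted double sum in~\eqref{eq:statioprob1} runs over an empty range, leaving $\pi_{Y^{(n)}}(c_\ell(\alpha)) = \sum_{A \in H_{\ell-1}(\alpha-1,0,\ldots,0)} w_x(A)$. The constraints $h_1(A) = \alpha-1$ and $h_2(A) = \cdots = h_{\ell-1}(A) = 0$ force every entry outside the top cell to vanish (each $h_i$ is a sum of nonnegative integers), so $H_{\ell-1}(\alpha-1,0,\ldots,0)$ is the singleton $\{A^\star\}$ with $A^\star_{1,1} = \alpha-1$. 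It then remains to evaluate $w_x(A^\star)$ from~\eqref{eq:wx}: one checks that $v_1(A^\star) = \alpha-1$ with $v_k(A^\star)=0$ otherwise, that $d_{\ell-1}(A^\star) = \alpha-1$ with $d_k(A^\star) = 0$ otherwise, and that every multinomial coefficient equals $1$. Substituting these values yields the stated monomial.

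For part~(2), the first observation is that $x_2 = \cdots = x_{n-1} = 0$ gives $y_k = y_1$ for all $1 \leq k \leq n-1$, and that in~\eqref{eq:wx} the factor $x_k^{v_k(A)}$ annihilates any array whose $k$-th column is nonempty for some $k \geq 2$. Hence only arrays supported on their first column survive. For such an array $A$ of size $m$, writing $a_k = A_{k,1}$, I will verify that all multinomials equal $1$, that $v_1(A) = \sum_k a_k$, and that the diagonal sums $d_i(A) = a_{m-i+1}$ satisfy $\sum_{k=1}^m d_k(A) = v_1(A)$; consequently $w_x(A) = x_0^{m}\, x_1^{v_1(A)}\, y_1^{-(m+v_1(A))}$, which depends only on the total mass $v_1(A)$. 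Feeding this into Theorem~\ref{thm:statioprob}, there is exactly one surviving array for each prescribed row-sum vector, so the double sum collapses to a single geometric series. Using formula~\eqref{eq:statioprob2} (to avoid the signed cancellation in~\eqref{eq:statioprob1}) and summing $\sum_{s \geq \gamma_\ell}(x_1/y_1)^s$ via $1 - x_1/y_1 = x_0/y_1$, the exponents recombine through $\sum_{i=1}^\ell \gamma_i = n-\ell$ into $x_0^{\ell-1}x_1^{n-\ell}/y_1^{n-1}$, which is manifestly a function of $n$ and $\ell$ alone.

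The monomial bookkeeping in both parts is routine. The one step requiring genuine care is the evaluation of $w_x$ on first-column-supported arrays in part~(2): specifically, matching each diagonal sum $d_i(A)$ to the single entry $A_{m-i+1,1}$, so that the multinomials trivialize and the identity $\sum_k d_k(A) = v_1(A)$ holds, thereby forcing the weight to depend only on the total mass. Once that is in place, both corollaries reduce to a geometric summation.
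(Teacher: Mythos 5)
Your part (1) is correct, and it is exactly the ``careful but elementary analysis of formula~\eqref{eq:statioprob1}'' that the paper gestures at (the paper omits the proof of this corollary entirely): since $\gamma_\ell=0$ the subtracted sum in~\eqref{eq:statioprob1} is empty, $H_{\ell-1}(\alpha-1,0,\ldots,0)$ is the singleton you describe, and your evaluation of its weight is accurate.

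Part (2), however, has a gap at the boundary case $\ell=n$, created precisely by your choice of~\eqref{eq:statioprob2} over~\eqref{eq:statioprob1}. Formula~\eqref{eq:statioprob2} sums over arrays of size $\ell$, and the weight of a size-$\ell$ array involves $x_k$ and $y_k$ for all $k\leq\ell$. The hypothesis only kills $x_2,\ldots,x_{n-1}$; nothing is assumed about $x_n$, which is a genuine rate of $Z^{(n)}$ (it does not affect the law of $Y^{(n)}$, but it does appear in the formula when $\ell=n$). So for the unique composition of length $n$, namely $(1,\ldots,1)$, your claim that ``only arrays supported on their first column survive'' is false when $x_n>0$: size-$n$ arrays whose last row is $(a,0,\ldots,0,b)$ with $b=A_{n,n}>0$ have nonzero weight, and moreover $y_n\neq y_1$ in general, so your closed form $w_x(A)=x_0^m x_1^{v_1(A)} y_1^{-(m+v_1(A))}$ also fails for them. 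The conclusion still holds --- the extra geometric sum over $b$ gives $\sum_{b\geq0} x_n^b/y_n^{b+1}=1/(y_n-x_n)=1/y_{n-1}=1/y_1$, which restores the stated monomial --- but your argument as written does not cover this case. Two easy repairs: (i) observe that the transition rates of $Y^{(n)}$ involve only $x_0,\ldots,x_{n-1}$, so $\pi_{Y^{(n)}}$ is independent of $x_n$ and you may set $x_n=0$ without loss of generality, after which your argument applies verbatim; or (ii) treat $\ell=n$ separately via~\eqref{eq:statioprob1}, where $\gamma_n=0$ makes the subtracted sum empty and the answer is the weight $x_0^{n-1}/(y_1\cdots y_{n-1})=x_0^{n-1}/y_1^{n-1}$ of the zero array of size $n-1$. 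In fact, sticking to~\eqref{eq:statioprob1} throughout, as the paper suggests, avoids $x_n$ entirely (its subtracted sum only involves size-$\ell$ arrays when $\gamma_\ell>0$, which forces $\ell\leq n-1$), and the signed sum collapses just as easily using $1-x_1/y_1=x_0/y_1$.
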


We will prove Corollary~\ref{cor:partitionfunction} in Section~\ref{sec:finiteballprojection}. 
Corollary~\ref{cor:leftmostbin} easily follows from the coupling of $Y^{(n)}$ with $Z^{(n)}$. Corollary~\ref{cor:monomialproba} follows from a careful but elementary analysis of formula~\eqref{eq:statioprob1}. 
We omit the proofs of the last two corollaries.

Given a sequence of transitions, the \emph{grand coupling time} associated with this sequence is defined to be the first time that the $2^{n-1}$ copies of the Markov chain $Y^{(n)}$ starting from all the possible states and evolving according to that sequence of transitions collapse to a single state.
Starting the $n$-ball projection of the Hilbert-Galton board from any state and applying transition $T_0$ $n$ times produces the state $(1,\ldots,1)$. Because of the existence of these coupling events when applying the transitions $T_j$, it is possible to sample exactly from the stationary distribution $\pi_{Y^{(n)}}$ using Propp and Wilson's coupling from the past technique~\cite{propp-wilson-1996}. Given a sequence $(u_i)_{i\geq1}$ of nonnegative integers, one can naturally associate the sequence of transitions $(T_{u_i})_{i\geq1}$ on $Y^{(n)}$.

\begin{ex}
\label{ex:grand coupling}
Take $n=3$ and the sequence $(u_i)_{i\geq1}$ to be $1,2,0,1,\ldots$. Then here is how the four possible Markov chains starting from the four possible initial states behave when they use $(u_i)_{i\geq1}$ as a source of randomness.
\[
\begin{matrix}
& T_1 && T_2 && T_0 && T_1 & \\
(3) &\rightarrow &(3) &\rightarrow &(3) &\rightarrow &(1,2) &\rightarrow &(2,1) \\
(2,1) &\rightarrow &(3) &\rightarrow &(3) &\rightarrow &(1,2) &\rightarrow &(2,1) \\
(1,2) &\rightarrow &(2,1) &\rightarrow &(2,1) &\rightarrow &(1,2) &\rightarrow &(2,1) \\
(1,1,1) &\rightarrow &(2,1) &\rightarrow &(2,1) &\rightarrow &(1,2) &\rightarrow &(2,1)
\end{matrix}
\]
Hence, the grand coupling time associated with the sequence $(u_i)$ is $\tau = 3$.
\end{ex}

It is not hard to see that the coupling time is linear in $n$ in expectation, and hence logarithmic in the number of states of the chain. Indeed, the coupling time is at least (resp. at most) equal to the time needed to have $n-1$ transitions of any type (resp. of type $0$ or $1$) in the chain. We go further and provide a way to compute exactly the grand coupling time associated with a given sequence of transitions. For any $i\geq1$, we say that an element $u_i$ of the sequence is {\em ineffective} if $u_i\geq 2$ and there are at most $u_i-2$ zeros to the left of $u_i$. All the other elements of the sequence are called {\em effective}. Then the following holds.

\begin{prop}
\label{prop:couplingtime}
The grand coupling time $\tau$ associated to a sequence $(u_i)_{i\geq1}$ is the position of the $(n-1)$'th effective element in the sequence.
\end{prop}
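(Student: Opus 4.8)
The plan is to recast the dynamics in terms of binary words and to track the forward coalescence of all $2^{n-1}$ trajectories through a single monovariant: the length of their longest common prefix.

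First I would set up the bijection between $C_n$ and binary words of length $n$ that begin with $1$, sending a composition $(c_1,\dots,c_\ell)$ to $w = 1\,0^{c_1-1}\,1\,0^{c_2-1}\cdots 1\,0^{c_\ell-1}$; here the number of $1$'s equals the length $\ell$, and these $1$'s sit exactly at the starting positions of the blocks. Under this encoding I would translate each transition into a simple word operation: a direct check of the definition of $T_j$ shows that $T_0$ prepends a $1$ and deletes the last letter, while for $1\le j\le \ell-1$ the map $T_j$ inserts a $0$ immediately after the $j$'th $1$ and then deletes the last letter, and for $j\ge\ell$ (equivalently, when the word has at most $j$ ones) it is the identity. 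The key qualitative feature is that both $T_0$ and a nontrivial $T_j$ lengthen any common prefix by inserting a forced letter near the front while only erasing at the back.

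Next I would run the forward coupling on the set $R_t$ of states reached at time $t$ by the $2^{n-1}$ copies, and study $k_t$, the length of the longest common prefix of $R_t$, noting that coalescence occurs exactly when $k_t=n$. To control $k_t$ I would maintain the invariant $(\star)$: all words in $R_t$ share a common prefix $D$ of length $k_t$, the word $D\,0^{\,n-k_t}$ lies in $R_t$, and some word of $R_t$ has a $1$ in position $k_t+1$ (the latter two clauses forcing the common prefix to be exactly $k_t$). I would also record that the number of $1$'s in $D$ equals $1$ plus the number of zeros among the transitions applied so far, since $T_0$ prepends a $1$ while every nontrivial $T_j$ with $j\ge1$ inserts a $0$ and leaves the count of $1$'s unchanged. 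Propagating $(\star)$ through a transition $T_{u_i}$ splits into two cases according to whether the $u_i$'th $1$ already lies inside $D$, i.e. whether $D$ contains at least $u_i$ ones. When it does, the insertion position is common to all words of $R_t$, the transition acts uniformly on the frozen prefix and lengthens it by one, and I would verify $(\star)$ with $k_{t+1}=k_t+1$ by exhibiting explicit preimages for the two required witnesses. Using the ones-count identity, this \emph{effective} case is precisely $u_i\le 1$, or $u_i\ge2$ with at least $u_i-1$ zeros to its left.

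The delicate case is the \textbf{ineffective} one, where $D$ has fewer than $u_i$ ones, so the $u_i$'th $1$ lies strictly beyond the frozen prefix and its position genuinely differs among the reachable words. Here one cannot expect $R_t$ to be preserved — it typically shrinks — so the argument must show only that $k_t$ does not increase, and I expect this to be the main obstacle. I would handle it through clause (iii) of $(\star)$: starting from a word $w^\star\in R_t$ with a $1$ in position $k_t+1$, the transition $T_{u_i}$ either fixes $w^\star$ (when it has too few ones) or performs its insertion and truncation strictly to the right of position $k_t$, so in either case the image still carries a $1$ in position $k_t+1$; meanwhile $D\,0^{\,n-k_t}$ has too few ones to be moved, persists in $R_{t+1}$, and keeps a $0$ in that position. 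Hence $k_{t+1}=k_t$ with $D$ unchanged. Combining the two cases, $k_t$ starts at $1$, rises by exactly one at each effective step and is otherwise constant, so coalescence at $k_t=n$ occurs exactly at the $(n-1)$'th effective element of the sequence, which is the assertion of the proposition.
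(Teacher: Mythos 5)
Your proof is correct, and it takes a genuinely different route from the paper's. The paper works directly with bins: it calls a bin \emph{recent} if it is the leftmost bin at time $0$ or is created later, observes that effective elements are precisely the transitions placing a ball in a recent bin, gets the lower bound by noting that the initial states $(n)$ and $(1,n-1)$ need exactly $n-1$ transitions to couple, and concludes with the brief remark that knowing how $n-1$ balls were added to recent bins determines the configuration regardless of the initial state. Your word encoding formalizes that intuition: the frozen prefix $D$ is the word-level record of the recent bins, your ones-count identity (the number of $1$'s in $D$ equals $1$ plus the number of $T_0$'s applied so far) is exactly the dictionary matching the paper's combinatorial definition of effective with its dynamical characterization, and your witnesses $D\,0^{n-k_t}$ and $w^\star$ are the avatars of the paper's pair $(n)$ and $(1,n-1)$. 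What your invariant buys is rigor and simultaneity: a single induction yields both that coalescence occurs at the $(n-1)$'th effective step and that it cannot occur earlier, whereas the paper's upper and lower bounds are separate and rather terse; the paper's version buys brevity and a direct probabilistic picture. Two points you should make explicit when writing yours up, both of which do check out: (a) in the effective case with $u_i=j\geq1$, the word $D\,0^{n-k_t}$ may have exactly $j$ ones and hence be fixed by $T_j$ rather than acted upon; this is harmless because ``insert a $0$ after the $j$'th one, then delete the last letter'' is a no-op on that word (all letters beyond the $j$'th one are $0$), so the uniform description still computes its image correctly and the common prefix still advances to $D\,0$; (b) in the ineffective case, the observation that the insertion and truncation act strictly to the right of position $k_t$ must be invoked for \emph{every} word of $R_t$, not only for $w^\star$, since preserving the first clause of $(\star)$ (all words share the prefix $D$) requires it --- and it does hold for every word, because the hypothesis that $D$ has fewer than $u_i$ ones forces the $u_i$'th one of any word in $R_t$ to lie beyond position $k_t$.
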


One last remarkable property of the $n$-ball projection of the Hilbert-Galton board is that one can compute its full spectrum. Let $M_n$ be the (column-stochastic) generator of $Y^{(n)}$. Then we have the following result.
\begin{thm}
\label{thm:spectrum}
For any $1 \leq j \leq n-1$, $-y_j$ appears as an eigenvalue of $M_n$ exactly $\binom {n-1}{j}$ times. In addition, $0$ is the Perron eigenvalue.
\end{thm}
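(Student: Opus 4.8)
The plan is to prove the stronger statement that the characteristic polynomial of $M_n$ is
\[
\chi_{M_n}(\lambda) = \lambda \prod_{j=1}^{n-1}(\lambda + y_j)^{\binom{n-1}{j}},
\]
by induction on $n$, the base case $n=1$ being trivial since $M_1=(0)$. That $0$ is a simple eigenvalue and is the Perron (largest real part) eigenvalue follows at once from the irreducibility and positive recurrence of $Y^{(n)}$ (Theorem~\ref{thm:statioprob}) together with Perron--Frobenius theory for generators of finite Markov chains, so the real content is the location and multiplicity of the remaining $2^{n-1}-1$ eigenvalues.

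The engine of the induction is a lumping. I encode a composition $c=(c_1,\dots,c_\ell)\in C_n$ by the binary word $b(c)\in\{0,1\}^{n-1}$ whose $i$-th letter is $1$ precisely when $i$ is a partial sum $c_1+\cdots+c_k$; thus the length $\ell$ equals one plus the number of $1$'s, and the last letter $b_{n-1}$ is $1$ iff $c_\ell=1$. In these coordinates the transition $T_0$ becomes the shift $b\mapsto(1,b_1,\dots,b_{n-2})$, which forgets the last letter, while each $T_j$ with $j\geq1$ pushes a suffix of the word one step to the right, dropping any overflow. I would then define $\rho:C_n\to C_{n-1}$ by deleting the last letter $b_{n-1}$ (equivalently, removing the rightmost ball), and check that $\rho$ is a lumping of $Y^{(n)}$ onto $Y^{(n-1)}$: because every transition first deletes the rightmost ball and then inserts, the total rate from $c$ into any $\rho$-fiber depends only on $\rho(c)$. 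Consequently the pullback $\rho^{\ast}\R^{C_{n-1}}$ of fiber-constant functions is an $M_n^{\mathsf T}$-invariant subspace on which $M_n^{\mathsf T}$ is conjugate to $M_{n-1}^{\mathsf T}$, and the characteristic polynomial factors as $\chi_{M_n}=\chi_{M_{n-1}}\cdot\chi_{\bar M}$, where $\bar M$ is the operator induced on the $2^{n-2}$-dimensional space of functions that are odd along the fibers $\{b\,0,\,b\,1\}$ of $\rho$.

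It then remains to show
\[
\chi_{\bar M}(\lambda)=\prod_{j=1}^{n-1}(\lambda+y_j)^{\binom{n-2}{j-1}},
\]
after which the induction hypothesis $\chi_{M_{n-1}}(\lambda)=\lambda\prod_{j=1}^{n-2}(\lambda+y_j)^{\binom{n-2}{j}}$ together with Pascal's identity $\binom{n-2}{j}+\binom{n-2}{j-1}=\binom{n-1}{j}$ closes the argument. To identify $\bar M$ I would compute its action explicitly on the fiber-difference functions $g(b):=f(b\,1)-f(b\,0)$, using that $T_0$ sends both elements of a fiber to the same state (so it contributes only through the exit rates) and that $b\,0$ and $b\,1$ differ in length by exactly one. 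The outcome I expect is that $\bar M$ is itself an instance of the same family of generators, but with every index shifted by one, i.e. with $y_j$ replaced by $y_{j+1}$; equivalently, the eigenvalue attached to each ``level'' $j$ of $M_{n-1}$ (the Perron value $0$ being counted as level $0$) is raised from $-y_j$ to $-y_{j+1}$. The cleanest way to make this precise is to set up a one-parameter generalization $M_n^{(s)}$ of the chain carrying a shift $s$ in the rates, prove the lumping identity $\bar M=M_{n-1}^{(1)}$ at that level of generality, and run the induction on the enlarged family.

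The main obstacle is this last step: verifying that the induced operator $\bar M$ genuinely is the shifted generator, and not merely some matrix with the correct spectrum. This requires careful bookkeeping of how the suffix-pushing transitions $T_j$ act on the last letter $b_{n-1}$ and on the length, including the boundary cases in which a transition deletes the last part so that an element of a fiber changes its $\rho$-image --- precisely the cases that prevent $Y^{(n)}$ from being triangular with respect to length. By contrast, establishing lumpability in the second step and assembling the multiplicities via Pascal's identity in the final step are routine once the binary encoding is in place.
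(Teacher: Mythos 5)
Your first half is correct and is, in fact, the paper's own construction in different clothing: your fibers of $\rho$ (deleting the rightmost ball) are exactly the paper's pairs $\{L(b),R(b)\}$ with $D(L(b))=D(R(b))=b$, and the lumpability you assert does hold --- for $b\in C_{n-1}$ of length $m$ one checks $T_j(L(b))=T_j(R(b))$ for all $0\leq j\leq m$, while $T_j$ fixes both elements of the fiber for $j>m$ --- so the fiber-constant functions form an invariant subspace on which the chain acts as $M_{n-1}$, giving the factorization $\chi_{M_n}=\chi_{M_{n-1}}\cdot\chi_{\bar M}$. This is precisely statements~\ref{block3} and~\ref{block4} of the paper's Lemma~\ref{lem:blockmatrix}, and your Perron argument and Pascal-identity bookkeeping at the end also match the paper's induction.

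The gap is in the step you yourself flag as the main obstacle: the identification of the induced operator $\bar M$ on fiber differences. Your expected outcome --- that $\bar M$ is ``an instance of the same family of generators with the rates shifted by one,'' to be proved via a one-parameter family $M_n^{(s)}$ and an induction over that family --- is false, and the strategy built on it cannot be carried out. Indeed $\bar M$ must carry the $2^{n-2}$ eigenvalues $-y_j$, all nonzero, whereas any generator in such a family (any matrix with zero column sums) has $0$ as an eigenvalue; concretely, for $n=3$ the induced operator is $\mathrm{diag}(-y_2,-y_1)$, while the rate-shifted generator built from $(x_1,x_2)$ has eigenvalues $0$ and $-(x_1+x_2)$, which do not even agree with $\{-y_1,-y_2\}$ as a set. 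The truth is simpler than you anticipate, and it is what your proposed explicit computation reveals \emph{provided} you use the full merging property, not only the $T_0$ case you mention: since every $T_j$ with $0\leq j\leq m$ sends both elements of the fiber over $b$ to the \emph{same} state and every $T_j$ with $j>m$ fixes both, the merging terms cancel in the difference and the fixing terms reproduce it, so that
\begin{equation*}
M_n\bigl(e(R(b))-e(L(b))\bigr)=\bigl((y_{n-1}-y_m)-y_{n-1}\bigr)\bigl(e(R(b))-e(L(b))\bigr)=-y_m\bigl(e(R(b))-e(L(b))\bigr).
\end{equation*}
Thus each fiber difference is already an eigenvector and $\bar M$ is \emph{diagonal} (statement~\ref{block1} of Lemma~\ref{lem:blockmatrix}); no enlarged family or second induction parameter is needed. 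The multiplicity of $-y_j$ in $\bar M$ is then just the number of compositions of $n-1$ of length $j$, namely $\binom{n-2}{j-1}$, and combining with the induction hypothesis for $M_{n-1}$ via Pascal's identity finishes the proof exactly as you planned.
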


\begin{rem}
\label{rem:eigenvectors}
In the course of the proof of Theorem~\ref{thm:spectrum}, we will prove that for any $n\geq3$, the matrix $M_n$ is not diagonalizable and we will provide $2^{n-2}$ independent eigenvectors of $M_n$.
\end{rem}

\begin{ex}
\label{ex:eigenvalues}
In the ordered basis $\{(1,1,1),(1,2),(2,1),(3)\}$, the transition matrix for the $Y^{(3)}$ is given by
\[
M_3 = 
\left(
\begin{array}{cccc}
 -x_1-x_2 & x_0 & 0 & 0 \\
 x_2 & -x_0-x_1 & x_0 & x_0 \\
 x_1 & x_1 & -x_0-x_1 & 0 \\
 0 & 0 & x_1 & -x_0 \\
\end{array}
\right),
\]
and the eigenvalues of $M_3$ are $0,-x_0-x_1,-x_0-x_1$ and $-x_0-x_1-x_2$, in agreement with Theorem~\ref{thm:spectrum}.
\end{ex}

\section{The Hilbert-Galton board $Z^{(n)}$ and its enrichment $X^{(n)}$}
\label{sec:binmodel}

In this section we provide two proofs of Theorem~\ref{thm:constantbinprob}. The first one is more probabilistic and uses a coupling of the Hilbert-Galton board with a bi-infinite stationary version of the process. The second one is more combinatorial and relies on the proof of Theorem~\ref{thm:triangle-chain-statioprob} giving the stationary distribution of the enriched Hilbert-Galton board which projects down to the Hilbert-Galton board.

\subsection{Coupling with a bi-infinite stationary version}

We start by proving the irreducibility and positive recurrence of the 
Hilbert-Galton board.

\begin{lem}
\label{lem:Zirreducible}
The Markov chain $Z^{(n)}$ is irreducible and positive recurrent.
\end{lem}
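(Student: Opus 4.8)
The plan is to establish the two properties separately, since irreducibility is a purely combinatorial reachability statement while positive recurrence requires controlling the long-run behaviour of an unbounded chain.

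For irreducibility, I would show that every state communicates with a fixed reference state, say the all-zeros configuration $\boldsymbol 0 = (0,\ldots,0)$. To reach $\boldsymbol 0$ from an arbitrary $c=(c_1,\ldots,c_n)$, I would apply the shift transition $U_0$ (available at rate $x_0>0$) exactly $n$ times: each application pushes the current contents one bin to the right and flushes out the rightmost bin, so after $n$ shifts every original ball has been evicted and the configuration is $\boldsymbol 0$. This uses only the assumption $x_0>0$. Conversely, to reach any target $c$ from $\boldsymbol 0$, I would first produce $c_n$ balls in bin $1$ by applying $U_1$ (rate $x_1>0$) $c_n$ times, then apply one shift $U_0$ to move them to bin $2$; then build up $c_{n-1}$ in bin $1$ and shift, and so on, working from the rightmost target coordinate inward. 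After $n$ such build-and-shift stages the configuration is exactly $c$. Since both directions use only the transitions with strictly positive rates $x_0,x_1>0$, every pair of states communicates and the chain is irreducible.

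For positive recurrence, the cleanest route is to exhibit a summable stationary measure, and here I would lean on the results stated later in the excerpt rather than reproving them: Theorem~\ref{thm:constantbinprob} gives an explicit candidate $\pi_{Z^{(n)}}(c)=\sum_{A\in H_n(c)} w_x(A)$, and verifying that this is a genuine probability distribution (nonnegative with total mass one) together with irreducibility yields positive recurrence for free. However, since this lemma is invoked \emph{inside} the proof of Theorem~\ref{thm:constantbinprob}, I would instead give a self-contained Lyapunov/Foster argument. Take the potential $V(c)=c_1+\cdots+c_n$ (total number of balls). Under the generator, each fall-in transition $U_j$ for $1\le j\le n$ increases $V$ by $1$ at total rate $x_1+\cdots+x_n$, while the shift $U_0$ decreases $V$ by $c_n$ (the evicted count) at rate $x_0$. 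The drift is thus $(\mathcal L V)(c) = (x_1+\cdots+x_n) - x_0\, c_n$, which is negative whenever $c_n$ is large. This alone does not bound the drift in terms of $V$, so the main obstacle is that the Lyapunov function must penalize balls in far-right bins more heavily, since only the rightmost bin is drained.

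To repair this I would use a weighted potential $V(c)=\sum_{i=1}^n \lambda^{i} c_i$ with a growth rate $\lambda>1$ chosen large enough that the $x_0$-shift term dominates. Under $U_0$ the weighted mass changes by $\lambda\sum_{i=1}^{n-1}\lambda^{i}c_i - \sum_{i=1}^n\lambda^i c_i = -\lambda c_1 - \lambda^{n}c_n + (\lambda-1)\!\sum_{i=2}^{n}\lambda^{i-1}c_i$; the dominant negative contribution $-x_0\lambda^n c_n$ can be made to overwhelm the bounded positive input from the fall-in transitions once one verifies the telescoping. Rather than optimize constants, I would observe it is simpler to bound the return time to $\boldsymbol 0$ directly: starting from any state, the waiting time until $n$ consecutive shift events have cleared the board is stochastically dominated by a sum of exponentials, and between visits to states of bounded norm the expected excursion length is finite because the shift rate $x_0>0$ provides a uniform geometric drain. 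I would therefore conclude by the Foster--Lyapunov criterion (the set $\{V\le K\}$ is finite for each $K$, and $\mathcal L V\le -1$ outside a finite set) that $Z^{(n)}$ is positive recurrent, with the verification of the negative drift outside a finite exceptional set being the one computation that needs care.
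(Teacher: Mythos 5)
Your irreducibility argument is correct and essentially the paper's (the paper merges your two directions into one sequence: $U_0$ followed by $c_n$ copies of $U_1$, then $U_0$ followed by $c_{n-1}$ copies of $U_1$, and so on --- the $n$ shifts automatically flush the old contents, so no detour through $(0,\ldots,0)$ is needed). The gap is in positive recurrence, and it is not a matter of ``optimizing constants'': your concrete proposal, $V(c)=\sum_{i=1}^n\lambda^i c_i$ with $\lambda>1$ large, fails structurally. The change of $V$ under $U_0$ is
\[
\lambda\sum_{i=1}^{n-1}\lambda^i c_i-\sum_{i=1}^{n}\lambda^i c_i
=(\lambda-1)\sum_{i=1}^{n-1}\lambda^{i}c_i-\lambda^{n}c_n
\]
(your displayed identity is algebraically wrong; test it at $n=2$, $c=(1,0)$). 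For $\lambda>1$ the first term is positive and grows linearly in $c_1,\dots,c_{n-1}$: on a state like $c=(M,0,\ldots,0)$ a shift multiplies $V$ by $\lambda$, so the drift is $x_0(\lambda-1)\lambda M+O(1)\to+\infty$, and increasing $\lambda$ only makes it worse. Your guiding intuition --- penalize far-right bins more heavily since only the rightmost bin is drained --- is exactly backwards: balls about to be flushed are harmless, whereas the danger is mass in the \emph{left} bins, which a shift moves rightward without removing; their weight must therefore \emph{decrease} when they move. Taking $0<\lambda<1$ repairs the argument: the shift then changes $V$ by $-(1-\lambda)\sum_{i=1}^{n-1}\lambda^i c_i-\lambda^n c_n\le-(1-\lambda)\lambda^n(c_1+\cdots+c_n)$, so the drift is at most $\sum_{j=1}^n x_j\lambda^j-x_0(1-\lambda)\lambda^n(c_1+\cdots+c_n)$, which is $\le-1$ outside a finite set; since level sets of $V$ are finite and the total jump rate is the constant $y_n$ (no explosion), Foster--Lyapunov then gives positive recurrence.

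Your fallback sketch --- bounding the return time to $(0,\ldots,0)$ directly --- is indeed the paper's route, but as stated it does not close the argument. The waiting time until ``$n$ consecutive shift events have cleared the board'' is not dominated by a sum of exponentials: $n$ shifts only evict the balls present before the first of them, so the chain ends at $(0,\ldots,0)$ only if \emph{no ball falls} during that stretch, and the waiting time for such a clean window is a geometric number of trials, not a fixed sum of exponentials. The paper makes this precise as follows: for each integer $k\ge1$, let $E_k$ be the event that during $(k-1,k)$ exactly one transition $U_1$ followed by $n$ transitions $U_0$ occur and nothing else (the initial $U_1$ forces the chain to leave the empty state, so one genuinely bounds a return time). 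These events are independent with common probability $\eta>0$ since $x_0,x_1>0$, and the return time to the empty state is at most $\min\{k\ge1 : E_k \text{ occurs}\}$, which is geometric and hence has finite mean. Either repair --- flipping to $\lambda<1$, or carrying out this window argument --- completes your proof; as written, neither of the routes you propose is sound.
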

\begin{proof}
Starting from any state $c'\in\Z_{\geq0}^n$, one can get to any given state $c=(c_1,\ldots,c_n)$ by applying the following sequence of transitions of type $U_0$ and $U_1$, which have positive probability since $x_0>0$ and $x_1>0$: apply $U_0$ followed by $c_n$ times $U_1$ then $U_0$ followed by $c_{n-1}$ times $U_1$, etc, until $U_0$ followed by $c_1$ times $U_1$. Thus $Z^{(n)}$ is irreducible.

Let $c_e=(0,\ldots,0)$ be the state with all the bins empty. We will show that the return time to $c_e$ has finite expectation. Assume that $Z^{(n)}(0)=c_e$ and set
\[
\tau_e=\inf\left\{t\geq0 \; | \; Z^{(n)}(t) \neq c_e\right\}.
\]
Then the return time to $c_e$ is given by
\[
R_e=\inf\left\{t\geq\tau_e \; | \; Z^{(n)}(t) = c_e\right\}.
\]
For any integer $k\geq1$, let $E_k$ be the event that between times $k-1$ and $k$, a single transition $U_1$ followed by $n$ transitions $U_0$ have occurred and no other transition has occurred. Then all the events $E_k$ have a common probability denoted by $\eta$. Since $x_0$ and $x_1$ are positive, we have $\eta>0$. Furthermore,
\[
0\leq R_e\leq \min\left\{k\geq1 \; | \; E_k \text{ is realized} \right\}.
\]
Since $R_e$ is dominated by a geometric random variable, it has finite expectation, which implies that $Z^{(n)}$ is positive recurrent.
\end{proof}

We first provide a proof of Theorem~\ref{thm:constantbinprob} based on a coupling of $Z^{(n)}$ with an bi-infinite stationary version $\hat{Z}^{(n)}$.
Consider the following $n+1$ independent Poisson point processes on the real line: for any $0 \leq j \leq n$, let $Q_j$ be a Poisson point process of intensity $x_j$. On the one hand, one can use the restrictions of these Poisson point processes to $\R_{\geq0}$ to define the Markov chain $(Z^{(n)}(t))_{t\geq0}$ : $Z^{(n)}(0)$ is distributed according to some initial probability distribution, and at every time $t>0$ belonging to some $Q_j$ with $0\leq j \leq n$, a transition $U_j$ occurs. The Poisson point process $Q_j$ corresponds to the times at which a ball falls into bin number $j$ (resp. a shift occurs) if $1\leq j\leq n$ (resp. if $j=0$). On the other hand, we use the Poisson point processes on the whole line to define the process $(\hat{Z}^{(n)}(t))_{t\in\R}$ as follows. Fix $t\in\R$, set $\tau_0(t)=t$ and define by induction on $1\leq i\leq n$
\[
\tau_i(t):=\sup \{Q_0 \cap (-\infty,\tau_{i-1}(t) )\}.
\]
The sequence $(\tau_i(t))_{1 \leq i \leq n}$ corresponds the last $n$ shift times before time $t$. Set also for any $1\leq i\leq n$ and 
$1\leq k\leq n-i+1$
\begin{equation}
\label{eq:defB}
B_{i,k}(t)=\#(Q_k \cap (\tau_i(t),\tau_{i-1}(t)))
\end{equation}
The integer $B_{i,k}(t)$ represents the number of balls that fall between the $i$'th shift before time $t$ and the $(i-1)$'th shift before time $t$ in the bin that is at that time at position $k$. We then set
\[
(\hat{Z}^{(n)}(t)):=(\hat{Z}^{(n)}_1(t),\ldots,\hat{Z}^{(n)}_n(t)),
\]
where for any $1\leq i \leq n$,
\begin{equation}
\label{eq:sumB}
\hat{Z}^{(n)}_i(t):=\sum_{j=1}^i B_{i+1-j,j}(t).
\end{equation}
\begin{lem}
\label{lem:biinfinitestationary}
For any $t\in\R$, the random variable $\hat{Z}^{(n)}(t)$ is distributed according to the stationary distribution $\pi_{Z^{(n)}}$ of $Z^{(n)}$.
\end{lem}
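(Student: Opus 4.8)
The plan is to show that the bi-infinite process $(\hat{Z}^{(n)}(t))_{t\in\R}$ is a \emph{stationary} version of the Markov chain $Z^{(n)}$, driven by the same Poisson point processes, and then to invoke the uniqueness of the stationary distribution guaranteed by Lemma~\ref{lem:Zirreducible}. Concretely, I would establish three things: (i) $\hat{Z}^{(n)}(t)$ is almost surely a well-defined element of $\Z_{\geq0}^n$; (ii) as a function of $t$, the process $\hat{Z}^{(n)}$ follows exactly the dynamics of $Z^{(n)}$, i.e.\ it is piecewise constant and jumps by $U_j$ at each point of $Q_j$; and (iii) the law $\mu$ of $\hat{Z}^{(n)}(t)$ does not depend on $t$. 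Together, (ii) and (iii) force $\mu$ to satisfy $\mu P_s=\mu$ for the transition semigroup $(P_s)$ of $Z^{(n)}$, and uniqueness then gives $\mu=\pi_{Z^{(n)}}$.

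For (i), since $x_0>0$ the process $Q_0$ almost surely has infinitely many points in $(-\infty,t)$, so the backward shift times $\tau_1(t)>\cdots>\tau_n(t)$ are almost surely finite; each $Q_k$ has only finitely many points in the bounded interval $(\tau_i(t),\tau_{i-1}(t))$, so every $B_{i,k}(t)$ in~\eqref{eq:defB} is finite and $\hat{Z}^{(n)}(t)$ is a genuine $n$-tuple of nonnegative integers. For (ii), note that between consecutive points of $Q_0\cup\cdots\cup Q_n$ neither the $\tau_i(t)$ nor the counts $B_{i,k}(t)$ change, so $\hat{Z}^{(n)}$ is constant there. At a point $s\in Q_0$ the backward enumeration reindexes as $\tau_i(s^+)=\tau_{i-1}(s^-)$, a fresh bin having received no balls appears at position $1$, and the contribution of the $n$'th bin is dropped; reading off~\eqref{eq:sumB} yields $\hat{Z}^{(n)}(s^+)=(0,\hat{Z}^{(n)}_1(s^-),\ldots,\hat{Z}^{(n)}_{n-1}(s^-))=U_0(\hat{Z}^{(n)}(s^-))$. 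At a point $s\in Q_k$ with $1\leq k\leq n$ the times $\tau_i$ are unchanged and exactly one count is incremented: if $s$ lies in $(\tau_m(t),\tau_{m-1}(t))$ then $B_{m,k}$ increases by one, which by~\eqref{eq:sumB} augments $\hat{Z}^{(n)}_{m+k-1}$ by one; taking $t=s^+$ gives $m=1$, so the jump at $s$ is $\hat{Z}^{(n)}(s^+)=U_k(\hat{Z}^{(n)}(s^-))$. Thus $\hat{Z}^{(n)}$ realizes the transition $U_j$ at each point of $Q_j$, matching $Z^{(n)}$.

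For (iii), the construction of $\hat{Z}^{(n)}(t)$ is a deterministic, time-translation-covariant functional of the configuration $(Q_0,\ldots,Q_n)$: shifting all the processes by $s$ turns $\hat{Z}^{(n)}(t)$ into $\hat{Z}^{(n)}(t+s)$. Since a superposition of independent homogeneous Poisson point processes is invariant in law under time translation, the law of $\hat{Z}^{(n)}(t)$ is the same for every $t$. Finally, stationarity of $\mu$ combined with $\mathbb{E}[f(\hat{Z}^{(n)}(t+s))\mid \hat{Z}^{(n)}(t)]=(P_sf)(\hat{Z}^{(n)}(t))$ (a consequence of (ii)) gives $\mu P_s=\mu$ for all $s>0$, and Lemma~\ref{lem:Zirreducible} forces $\mu=\pi_{Z^{(n)}}$.

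The main point requiring care is the bookkeeping in step (ii): checking that at a shift the reindexing of the $\tau_i$ reproduces $U_0$ exactly (in particular that the newly created bin is empty and the oldest bin is discarded), and that a ball of $Q_k$ increments precisely the coordinate corresponding to the bin occupying position $k$ at the jump time, which requires tracking that this bin has moved $m-1$ positions to the right by time $t$. The remaining passage, from ``stationary process with the dynamics of $Z^{(n)}$'' to ``one-dimensional marginal equal to $\pi_{Z^{(n)}}$'', is routine once (ii) and (iii) are in hand.
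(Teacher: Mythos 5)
Your proof is correct, but it takes a genuinely different route from the paper's. You verify directly that $\hat{Z}^{(n)}$ is a stationary \emph{version} of the chain: piecewise constant, jumping by $U_j$ at each point of $Q_j$ (your bookkeeping $\tau_i(s^+)=\tau_{i-1}(s^-)$ at shift times, the emptiness of the new first bin, and the increment of $B_{1,k}$ at points of $Q_k$ are exactly right), with one-dimensional marginal law independent of $t$ by translation invariance of the Poisson processes; you then conclude by uniqueness of the invariant distribution of an irreducible, positive recurrent, non-explosive chain. The paper never checks that $\hat{Z}^{(n)}$ follows the dynamics. Instead it couples $\hat{Z}^{(n)}$ with the forward chain $Z^{(n)}$ built from the same Poisson processes restricted to $\R_{\geq0}$, and observes that on the event $\#(Q_0\cap(0,t))\geq n$ the state $Z^{(n)}(t)$ is reconstructed from the points in $(0,t)$ exactly as in the definition of $\hat{Z}^{(n)}(t)$, since after $n$ shifts all memory of the initial condition has been flushed out; hence $\mathbb{P}(Z^{(n)}(t)\neq\hat{Z}^{(n)}(t))\leq\mathbb{P}(\#(Q_0\cap(0,t))<n)\to0$ as $t\to\infty$. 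Combining this with the convergence of the law of $Z^{(n)}(t)$ to $\pi_{Z^{(n)}}$ (from Lemma~\ref{lem:Zirreducible}) and the $t$-invariance of the law of $\hat{Z}^{(n)}(t)$ identifies that constant law as $\pi_{Z^{(n)}}$. Both arguments rest on Lemma~\ref{lem:Zirreducible} and on translation invariance; yours avoids invoking the convergence-to-equilibrium theorem, at the cost of the jump-by-jump verification and of establishing the Markov property of $\hat{Z}^{(n)}$ (which, to be fully rigorous, requires noting that $\hat{Z}^{(n)}(t+s)$ is a deterministic function of $\hat{Z}^{(n)}(t)$ and of the Poisson points in $(t,t+s]$, the latter being independent of the past). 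The paper's memory-loss coupling --- a coupling-from-the-past idea --- replaces all of that with a single comparison at a fixed large time, which is why its write-up is shorter.
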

\begin{proof}
If $t\geq0$ is such that
\begin{equation}
\label{eq:coupling}
\#(Q_0 \cap (0,t))\geq n
\end{equation}
this means that at least $n$ shifts have occurred between time $0$ and time $t$ thus the state $Z^{(n)}(t)$ can be reconstructed only from the knowledge of the realization of the Poisson point processes between times $0$ and $t$ (not requiring any knowledge of the state at time $0$), exactly in the same way that $\hat{Z}^{(n)}(t)$ was defined. Since the processes $(\hat{Z}^{(n)}(t))_{t\in\R}$ and $(Z^{(n)}(t))_{t\geq0}$ are coupled by the use of the same Poisson point processes for their definition, this means that if $t\geq0$ is such that condition~\eqref{eq:coupling} is satisfied, then $Z^{(n)}(t)=\hat{Z}^{(n)}(t)$ a.s. So for any $t\geq0$,
\begin{equation}
\label{eq:domination}
\mathbb{P}(Z^{(n)}(t) \neq \hat{Z}^{(n)}(t)) \leq \mathbb{P}(\#(Q_0 \cap (0,t))< n).
\end{equation}
Since $x_0>0$, the right-hand side of~\eqref{eq:domination} tends to zero as $t$ goes to infinity. 
As a consequence of Lemma~\ref{lem:Zirreducible}, the distribution of $Z^{(n)}(t)$ converges to its stationary distribution $\pi_{Z^{(n)}}$, thus the distribution of $\hat{Z}^{(n)}(t)$ converges to $\pi_{Z^{(n)}}$ too. We conclude from the fact that the distribution of $\hat{Z}^{(n)}(t)$ is the same for every $t\in\R$ because the distributions of the Poisson point processes used to define $\hat{Z}^{(n)}(t)$ are invariant by translation.
\end{proof}
Now we can use the bi-infinite stationary process $\hat{Z}^{(n)}$ to derive a formula for the distribution $\pi_{Z^{(n)}}$.

\begin{proof}[Proof of Theorem~\ref{thm:constantbinprob}]
In this proof we set $t=0$ and to simplify notation we will write simply $\tau_i$ and $B_{i,k}$ instead of $\tau_i(0)$ and $B_{i,k}(0)$ respectively. Fix $c=(c_1,\ldots,c_n)\in\Z_{\geq0}^n$ and $1\leq \ell \leq n$. By the definition~\eqref{eq:sumB} of $\hat{Z}^{(n)}(0)$, for any $1\leq i \leq n$,
\[
\mathbb{P}(\hat{Z}^{(n)}_i(0)=c_i)=\sum\limits_{\substack{(A_{i,1},\ldots,A_{i,i})\in\mathbb{Z}_{\geq0}^i \\ A_{i,1}+\cdots+A_{i,i}=c_i}} \mathbb{P}(B_{i+1-j,j}=A_{i,j} \text{ for all } 1\leq j \leq i).
\]
More generally, we can express the probability of a distribution of balls in the leftmost $\ell$ bins in terms of triangular arrays as
\begin{multline}
\label{eq:constantbinstatioprob}
\mathbb{P}(\hat{Z}^{(n)}_i(0)=c_i \text{ for all } 1\leq i \leq \ell)= \\
\sum_{A\in H_{\ell}(c_1,\ldots,c_{\ell})}\mathbb{P}(B_{i+1-j,j}=A_{i,j} \text{ for all } (j,i) \text{ such that } 1\leq j \leq i \leq \ell).
\end{multline}
Set $t_i:=\tau_{i-1}-\tau_i$ for all $1 \leq i \leq n$. Conditioned on the value of the $\tau_i$'s, the random variables $(B_{i,k})$ are independent and $B_{i,k}$ is distributed like a Poisson random variable of parameter $t_i x_k$.  We can compute the summand of the right-hand side of~\eqref{eq:constantbinstatioprob} conditionally on $Q_0$, ie conditioned on the $t_i$'s by the formula
\begin{align*}
&\mathbb{P}(B_{i+1-j,j}=A_{i,j} \text{ for all } (j,i) \text{ such that } 1\leq j \leq i \leq \ell \; | \; Q_0) \\
&=\prod_{i=1}^{\ell}\prod_{j=1}^i e^{-t_{i+1-j} x_j} \frac{(t_{i+1-j} x_j)^{A_{i,j}}}{A_{i,j}!}
\end{align*}
Since the $t_i$'s are iid exponential random variables with rate parameter $x_0$, we deduce that
\begin{align*}
&\mathbb{P}(B_{i+1-j,j}=A_{i,j} \text{ for all } (j,i) \text{ such that } 1\leq j \leq i \leq \ell) \\
&=\int_{(t_1,\ldots t_{\ell})\in \mathbb{R}_{\geq0}^{\ell}}\prod_{i=1}^{\ell}
e^{-x_0t_i} \: x_0 \; \text{d}t_i\prod_{j=1}^i e^{-t_{i+1-j} x_j} \frac{(t_{i+1-j} x_j)^{A_{i,j}}}{A_{i,j}!} \\
&= x_0^{\ell} \prod_{j=1}^{\ell} x_j^{v_j(A)} \int_{(t_1,\ldots t_{\ell})\in \mathbb{R}_{\geq0}^{\ell}}\prod_{i=1}^{\ell}e^{-y_{\ell+1-i}t_i} \frac{t_i^{d_{\ell+1-i}(A)}}{\prod_{j=1}^{\ell-i+1} A_{i-1+j,j}!} \text{d}t_i \\
&= x_0^{\ell} \prod_{j=1}^{\ell} x_j^{v_j(A)} \prod_{i=1}^{\ell} \int_{t_i\in \mathbb{R}_{\geq0}}e^{-y_{\ell+1-i}t_i} \frac{t_i^{d_{\ell+1-i}(A)}}{\prod_{j=1}^{\ell-i+1} A_{i-1+j,j}!} \text{d}t_i \\
&= x_0^{\ell} \prod_{j=1}^{\ell} x_j^{v_j(A)} \prod_{i=1}^{\ell} \frac{1}{y_{\ell+1-i}^{1+d_{\ell+1-i}(A)}} \frac{d_{\ell+1-i}(A)!}{\prod_{j=1}^{\ell-i+1} A_{i-1+j,j}!} \\
&=w_x(A).
\end{align*}
Since the distribution of $\hat{Z}^{(n)}(0)$ is $\pi_{Z^{(n)}}$ by Lemma~\ref{lem:biinfinitestationary}, this concludes the proof. 
\end{proof}

\subsection{The enriched Hilbert-Galton board}

In this subsection we will assume that $x_i>0$ for all $0 \leq i \leq n$.
We now provide an alternate proof of Theorem~\ref{thm:constantbinprob} by obtaining it as a corollary of Theorem~\ref{thm:triangle-chain-statioprob} on the stationary distribution of the enriched Hilbert-Galton board $X^{(n)}$.
We begin by showing irreducibility and positive recurrence. 

\begin{lem}
\label{lem:triangle-chain-irred}
The Markov chain $X^{(n)}$ on $\A_n$ is irreducible and positive recurrent.
\end{lem}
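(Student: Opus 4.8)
The plan is to establish the two required properties of $X^{(n)}$ separately, modelling the argument on the proof of Lemma~\ref{lem:Zirreducible}, since $X^{(n)}$ has a very similar transition structure. For \textbf{irreducibility}, I would first identify a convenient ``reset'' state from which every array is reachable and which is reachable from every array. The natural candidate is the all-zeros array $\epsilon_n=(0)_{1\leq j \leq k\leq n}$. Observe that a single application of the rate-$x_0$ transition shifts every entry down and to the right by one diagonal position while inserting a fresh column of zeros on the right; applying it $n$ times in succession pushes all the original entries off the array and leaves the all-zeros array. Hence $\epsilon_n$ is reachable from any $A\in\A_n$ using only the $x_0$-transition (which has positive rate). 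Conversely, I would show that any target array $A$ can be built up from $\epsilon_n$. The idea is to fill in the entries one diagonal at a time, from the bottom-right towards the top-left, interleaving increments with $x_0$-shifts: the rate-$x_i$ transition increments the diagonal entry $A_{i,i}$, and a subsequent $x_0$-shift carries that entry into the interior along its diagonal. By choosing the order of increments and shifts appropriately, every entry of $A$ can be placed in its correct slot. Since all transitions used have positive rate (here I use the hypothesis $x_i>0$ for all $0\leq i\leq n$), this establishes that every state communicates with $\epsilon_n$, and therefore irreducibility.

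For \textbf{positive recurrence}, I would reuse the renewal/domination argument from Lemma~\ref{lem:Zirreducible} almost verbatim. Fix the all-zeros array $\epsilon_n$ as the reference state. Assume $X^{(n)}(0)=\epsilon_n$, let $\tau_e$ be the first exit time from $\epsilon_n$, and let $R_e$ be the first return time to $\epsilon_n$ after $\tau_e$. The key observation is that $n$ consecutive applications of the $x_0$-transition reset $X^{(n)}$ to $\epsilon_n$ from \emph{any} current state, exactly as they did in the proof above. Therefore, for each integer $k\geq 1$, let $F_k$ be the event that during the time interval $(k-1,k)$ exactly $n$ transitions of type $x_0$ occur and no other transition occurs. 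The events $F_k$ are independent and share a common positive probability $\eta>0$ (positive because $x_0>0$). Whenever some $F_k$ is realized, the chain is in state $\epsilon_n$ at time $k$. Consequently $R_e$ is dominated by $\min\{k\geq 1 \mid F_k \text{ is realized}\}$, which is a geometric random variable and hence has finite expectation. This shows $\epsilon_n$ is positive recurrent, and since the chain is irreducible, positive recurrence holds for all states.

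The step I expect to be the main obstacle is the constructive part of irreducibility, namely verifying carefully that an arbitrary array $A$ can be assembled from $\epsilon_n$ via increments and shifts landing every entry in its correct position. Unlike the $Z^{(n)}$ case, where one could freely add balls to bin $1$ and shift, here each $x_i$-transition only increments the \emph{diagonal} entry $A_{i,i}$, and the $x_0$-shift moves entries rigidly along diagonals. I would need to check that the diagonal-indexed transitions, combined with shifts, have enough reach to populate all off-diagonal entries; the cleanest way is probably to argue diagonal-by-diagonal, showing that the $k$-th diagonal can be set without disturbing already-placed entries, and to make explicit the bookkeeping of which $x_i$ to fire at each stage. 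The positive recurrence argument, by contrast, is routine once the $n$-fold $x_0$-reset is observed, so I would spend the bulk of the write-up making the irreducibility construction precise.
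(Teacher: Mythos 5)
Your irreducibility argument is essentially the paper's: the paper likewise builds an arbitrary target $A$ by alternating a type-$0$ shift with increments of main-diagonal entries (a shift, then $A_{n,1}$ transitions of type $1$; a shift, then $A_{n-1,1}$ of type $1$ and $A_{n,2}$ of type $2$; and so on), so that $A$ is filled diagonal by diagonal while the $n$ shifts simultaneously erase the starting array (no need to pass through the all-zeros array first, though that is harmless). Two points of your bookkeeping need correcting before your construction is precise. First, the rate-$x_0$ transition moves each entry straight down its \emph{column}, $(k,j)\mapsto(k+1,j)$, discards the bottom row, and inserts fresh zeros along the \emph{main diagonal} --- not ``down and to the right by one diagonal position'' with ``a fresh column of zeros on the right'', and not ``rigidly along diagonals'': in the paper's notation an entry on the diagonal $d_i$ moves to $d_{i-1}$. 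Second, the filling order is consequently from the bottom-\emph{left} corner $(n,1)$ (the diagonal $d_1$, which needs $n-1$ shifts after being deposited) up to the main diagonal $d_n$ (deposited last), not ``from the bottom-right towards the top-left''. With these corrections your round-by-round plan is exactly the paper's: in round $r$, deposit the values $A_{n-r+i,i}$, $i=1,\dots,r$, at positions $(i,i)$; the remaining $n-r$ shifts carry them to their final places.

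The genuine gap is in positive recurrence. Your events $F_k$ (exactly $n$ transitions of type $x_0$ and nothing else during $(k-1,k)$) do \emph{not} dominate $R_e$: the $x_0$-transition fixes the all-zeros array $\epsilon_n$, so on the event $F_1$ a chain started at $\epsilon_n$ never leaves it during $[0,1]$; then $\tau_e\geq 1$ and hence $R_e>1=\min\{k\geq 1\mid F_k \text{ is realized}\}$, contradicting your claimed domination. This is exactly why the paper's event $E_k$ in Lemma~\ref{lem:Zirreducible} consists of a \emph{single transition of type $1$ followed by} $n$ transitions of type $0$ and nothing else: the type-$1$ jump forces the chain out of the reference state (so $\tau_e<k$), and the $n$ shifts then force it back by time $k$, giving $R_e\leq k$. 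Your proof is repaired either by copying this event verbatim for $X^{(n)}$ (an $x_1$-increment takes $\epsilon_n$ to a nonzero array, and $n$ shifts reset any array to $\epsilon_n$), or by bounding $E[R_e]\leq E[\tau_e]+E[W]$, where $\tau_e$ is exponential of rate $x_1+\cdots+x_n>0$ and $W$ is the geometrically dominated wait, after $\tau_e$, for a unit window in which your event occurs. As written, however, the domination step fails.
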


\begin{proof}
Starting from any array $A'$, one can reach an arbitrary array $A$ by the following sequence of transitions which all have positive probability: first make a transition of type $0$ followed by $A_{n,1}$ transitions of type $1$, then make a transition of type $0$ followed by $A_{n-1,1}$ transitions of type $1$ and $A_{n,2}$ transitions of type $2$. In this way continue filling up the array diagonally until one arrives at $A$. Therefore, $X^{(n)}$ is irreducible. By the same argument as in the proof of Lemma~\ref{lem:Zirreducible}, if $A_e$ denotes the array filled with zeros, the expected return time to $A_e$ is finite and hence the chain $X^{(n)}$ is positive recurrent.
\end{proof}

Now we prove that the Markov chain $X^{(n)}$ indeed projects down to the Markov chain $Z^{(n)}$.

\begin{lem}
Defining the map
\[
p_n:A\in\A_n\rightarrow (h_1(A),\ldots,h_n(A))\in\Z_{\geq0}^n,
\]
the projection $p_n(X^{(n)})$ is a Markov chain with the same law as $Z^{(n)}$.
\end{lem}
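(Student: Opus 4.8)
The plan is to exploit the fact that each of the $n+1$ transition types of $X^{(n)}$ acts on the vector of row sums $p_n(A)=(h_1(A),\dots,h_n(A))$ in a way that (i) is a deterministic function of that vector alone and (ii) reproduces exactly the corresponding transition $U_j$ of $Z^{(n)}$, at the same rate $x_j$. Once this is established, the projected process is a \emph{lumping} of $X^{(n)}$ in the sense of the strong lumpability criterion, and identifying the lumped dynamics with those of $Z^{(n)}$ finishes the proof.

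First I would carry out the elementary bookkeeping on row sums. For the rate-$x_i$ transition with $1\le i\le n$, only the diagonal entry $A_{i,i}$ is incremented, so $h_i$ increases by one and all other row sums are unchanged; hence $p_n(A)$ is sent to $U_i(p_n(A))$. For the rate-$x_0$ transition, the new array $A'$ satisfies $A'_{1,1}=0$ and $A'_{k,j}=A_{k-1,j}$ for $2\le k\le n$ and $1\le j\le k-1$, with $A'_{k,k}=0$; summing across row $k$ gives $h_1(A')=0$ and $h_k(A')=h_{k-1}(A)$ for $2\le k\le n$, which is precisely $U_0(p_n(A))$. The crucial point to record is that in every case the image depends only on $p_n(A)$, not on the individual entries $A_{k,j}$.

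With this in hand, I would conclude via a coupling argument identical in spirit to the one used for $\hat Z^{(n)}$. Drive $X^{(n)}$ by $n+1$ independent Poisson point processes $Q_0,\dots,Q_n$ of intensities $x_0,\dots,x_n$, applying transition $j$ at each point of $Q_j$. By the computation above, at each such point the projection $p_n(X^{(n)})$ applies the map $U_j$; since the map applied depends only on the current value of the projection, the process $p_n(X^{(n)})$ is driven by the very same Poisson processes through the maps $U_j$, which is exactly the definition of $Z^{(n)}$. Hence for any initial law of $X^{(n)}$, the process $p_n(X^{(n)})$ has the law of $Z^{(n)}$ started from the pushforward initial law; in particular it is Markov.

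The only genuine subtlety, and the step I expect to require care in the writeup, is the passage from ``each transition type acts deterministically on row sums'' to ``the projection is itself a Markov chain''. A function of a Markov chain need not be Markov, and the reason it works here is precisely the rate-uniformity: the rate $x_j$ attached to a transition of type $j$ does not depend on the array, and the effect on $p_n$ is constant along each fiber. I would phrase this either as the Poisson-process coupling above, which I find cleanest since it simultaneously fixes the law, or, equivalently, by checking the lumpability criterion that $\sum_{B:\,p_n(B)=c'} q(A,B)$ takes the same value for all $A$ with $p_n(A)=c$, where $q$ denotes the transition rates of $X^{(n)}$.
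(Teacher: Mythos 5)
Your proposal is correct and follows essentially the same route as the paper: the paper's proof consists precisely of your bookkeeping step, observing that at rate $x_i$ the row sum $h_i$ increases by one while at rate $x_0$ the vector $(h_1(A),\ldots,h_n(A))$ becomes $(0,h_1(A),\ldots,h_{n-1}(A))$, and then identifies this with the law of $Z^{(n)}$. The only difference is that you spell out the lumpability/Poisson-coupling justification for why the projection is itself Markov --- a point the paper treats as immediate, since the rate and the effect on $p_n(A)$ of each transition depend only on $p_n(A)$ --- so your writeup is a more careful rendering of the same argument.
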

\begin{proof}
Fix $A\in\A_n$. Then for $1\leq i \leq n$, at rate $x_i$ the integer $h_i(A)$ increases by $1$ and at rate $x_0$ the $n$-tuple $(h_1(A),\ldots,h_n(A))$ gets turned into $(0,h_1(A),\ldots,h_{n-1}(A))$. This is exactly the law of $Z^{(n)}$.
\end{proof}

\begin{rem}
The numbers $A_{i,j}$ have the following interpretation in terms of balls and bins. Call an \emph{epoch} the time between two consecutive shifts and count the epochs backwards: the current epoch is indexed by $1$, the epoch just before the current epoch is indexed by $2$, etc. Then $A_{i,i+1-j}$ is the number of balls that are in the bin currently indexed by $i$ which arrived during epoch $j$. In the notation of the previous subsection, we have $A_{i,j}=B_{i+1-j,j}$.
\end{rem}

We will need the following identity to complete the proof of Theorem~\ref{thm:triangle-chain-statioprob}.

\begin{lem}
\label{lem:identity}
Let $\delta_1,\ldots,\delta_{n-1}$ be $n-1$ nonnegative integers. Then
\[
\sum_{(\alpha_1,\dots,\alpha_n) \in \mathbb{Z}_{\geq 0}^n} 
\prod_{k=1}^{n-1} \binom{\delta_k + \alpha_{k+1}}{\alpha_{k+1}} 
\prod_{k=1}^{n} \left(\frac{x_k}{y_k} \right)^{\alpha_k}
= 
\frac{y_n^{\delta_{n-1}+1}}{x_0 y_1^{\delta_1}}
\prod_{k=2}^{n-1} y_k^{\delta_{k-1} - \delta_k}. 
\]
\end{lem}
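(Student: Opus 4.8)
The plan is to exploit the fact that the summand factorizes completely across the variables $\alpha_1,\dots,\alpha_n$, so that the multivariate sum collapses into a product of one-dimensional sums, each admitting a closed form. The key observation is that the binomial factor $\binom{\delta_k+\alpha_{k+1}}{\alpha_{k+1}}$ depends on $\alpha_{k+1}$ alone; in particular $\alpha_1$ appears in no binomial (the product runs over $k=1,\dots,n-1$, involving only $\alpha_2,\dots,\alpha_n$). Hence there is no coupling between the $\alpha$'s, and
\[
\sum_{(\alpha_1,\dots,\alpha_n)}\prod_{k=1}^{n-1}\binom{\delta_k+\alpha_{k+1}}{\alpha_{k+1}}\prod_{k=1}^{n}\left(\frac{x_k}{y_k}\right)^{\alpha_k}=\left(\sum_{\alpha_1\geq0}\left(\frac{x_1}{y_1}\right)^{\alpha_1}\right)\prod_{k=1}^{n-1}\left(\sum_{\alpha_{k+1}\geq0}\binom{\delta_k+\alpha_{k+1}}{\alpha_{k+1}}\left(\frac{x_{k+1}}{y_{k+1}}\right)^{\alpha_{k+1}}\right).
\]

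Next I would evaluate each factor. Since $x_0>0$ we have $0\leq x_k/y_k<1$ for every $k\geq1$ (because $y_k-x_k=x_0+\cdots+x_{k-1}>0$), which guarantees convergence. The geometric series gives
\[
\sum_{\alpha_1\geq0}\left(\frac{x_1}{y_1}\right)^{\alpha_1}=\frac{1}{1-x_1/y_1}=\frac{y_1}{x_0},
\]
using $y_1-x_1=x_0$, while the negative binomial series $\sum_{m\geq0}\binom{\delta+m}{m}z^m=(1-z)^{-(\delta+1)}$ gives
\[
\sum_{\alpha_{k+1}\geq0}\binom{\delta_k+\alpha_{k+1}}{\alpha_{k+1}}\left(\frac{x_{k+1}}{y_{k+1}}\right)^{\alpha_{k+1}}=\left(1-\frac{x_{k+1}}{y_{k+1}}\right)^{-(\delta_k+1)}=\left(\frac{y_{k+1}}{y_k}\right)^{\delta_k+1},
\]
where I invoke the defining recursion $y_{k+1}=y_k+x_{k+1}$, so that $y_{k+1}-x_{k+1}=y_k$.

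Finally I would collect the product $\frac{y_1}{x_0}\prod_{k=1}^{n-1}(y_{k+1}/y_k)^{\delta_k+1}$ and simplify by tracking the net exponent of each $y_j$. The factor $y_j$ enters the numerator from the $k=j-1$ term (when $j\geq2$) with exponent $\delta_{j-1}+1$, and the denominator from the $k=j$ term (when $j\leq n-1$) with exponent $\delta_j+1$. This produces exponent $-(\delta_1+1)$ for $y_1$, exponent $\delta_{j-1}-\delta_j$ for $2\leq j\leq n-1$, and exponent $\delta_{n-1}+1$ for $y_n$; the prefactor $y_1/x_0$ then raises the $y_1$-exponent to $-\delta_1$ and supplies the $x_0^{-1}$, yielding exactly the right-hand side. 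I do not anticipate any genuine obstacle: the whole argument rests on the decoupling of the sum together with two standard series identities, and the only point demanding care is the telescoping exponent bookkeeping in this last step.
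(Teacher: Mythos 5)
Your proof is correct and follows essentially the same route as the paper's: factor the sum into $n$ one-dimensional sums, evaluate each via the generalized binomial theorem (the $\alpha_1$ sum being the $\delta=0$ geometric case), and telescope using $y_{k+1}-x_{k+1}=y_k$. You have merely spelled out the convergence check and the exponent bookkeeping that the paper leaves implicit.
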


\begin{proof}
From the generalized binomial theorem, we have that for any real number $\delta$ and for any real number $0\leq z <1$,
\[
\sum_{\alpha=0}^\infty \binom{\delta+\alpha}\alpha z^\alpha = (1-z)^{-\delta-1}.
\]
The sum on the left-hand side splits into $n$ such sums (with $\delta = 0$ when $k=1$).
Recalling that $y_{k-1} + x_k = y_k$ and rearranging the terms gives the right-hand side.
\end{proof}

\begin{lem}
\label{lem:stat prob}
For any $n\geq1$, the measure $w_x:\A_n\rightarrow\mathbb{R}_{\geq0}$ is a probability measure, that is
\[
\sum_{A \in \A_n} w_x(A) = 1.
\]
\end{lem}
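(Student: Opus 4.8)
The plan is to show that $w_x(A)$ factorizes over the $n$ diagonals of $A$, which turns the sum over $\A_n$ into a product of independent geometric series. First I would fix the diagonal decomposition: for $1\leq m \leq n$ the $m$-th diagonal is the list of entries $A_{n-m+1,1},\dots,A_{n,m}$, sitting in columns $1,\dots,m$ and summing to $d_m(A)$. Since each entry of $A$ lies on exactly one diagonal, an array is specified by its diagonal sums $(d_1,\dots,d_n)\in\Z_{\geq0}^n$ together with a splitting of each $d_m$ among the $m$ entries of the corresponding diagonal. The point is that $w_x$ is compatible with this: writing $\prod_{k=1}^n x_k^{v_k(A)}=\prod_{1\leq k\leq i\leq n}x_k^{A_{i,k}}$ and regrouping by diagonal (the entry $A_{n-m+j,j}$ sits in column $j$, hence carries weight $x_j$), the whole weight becomes
\[
w_x(A)=x_0^n\prod_{m=1}^n\frac{1}{y_m^{d_m+1}}\binom{d_m}{a_{m,1},\dots,a_{m,m}}\prod_{j=1}^m x_j^{a_{m,j}},
\]
where $a_{m,j}:=A_{n-m+j,j}$, a genuine product over diagonals.

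Then I would sum in two stages. Holding the diagonal sums fixed and summing over the internal splittings, the multinomial theorem collapses the $m$-th diagonal to $(x_1+\cdots+x_m)^{d_m}=(y_m-x_0)^{d_m}$, using $y_m=x_0+x_1+\cdots+x_m$. What remains is a product of geometric series,
\[
\sum_{A\in\A_n}w_x(A)=x_0^n\prod_{m=1}^n\frac{1}{y_m}\sum_{d_m\geq0}\left(\frac{y_m-x_0}{y_m}\right)^{d_m},
\]
and since $x_0>0$ forces $0\leq (y_m-x_0)/y_m<1$, each inner sum equals $y_m/x_0$; the product then telescopes to $x_0^n\prod_{m=1}^n x_0^{-1}=1$.

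I expect the only delicate point to be the index bookkeeping, namely checking that the regrouping of the column weights $x_k^{v_k}$ matches the diagonals carrying the multinomial coefficients and the factors $y_m^{-(d_m+1)}$, so that the weight really does split as a clean product over diagonals. Once this is in place, the multinomial theorem and the geometric summation are routine, and notably Lemma~\ref{lem:identity} is not needed for this particular statement. As a shortcut one could instead invoke the already-completed first proof of Theorem~\ref{thm:constantbinprob}, which gives $\sum_{A\in H_n(c)}w_x(A)=\pi_{Z^{(n)}}(c)$; since the sets $H_n(c)$ partition $\A_n$ as $c$ ranges over $\Z_{\geq0}^n$, summing over $c$ yields $\sum_{A\in\A_n}w_x(A)=\sum_c\pi_{Z^{(n)}}(c)=1$.
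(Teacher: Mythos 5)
Your proof is correct, and it takes a genuinely different route from the paper's. The paper proceeds by induction on $n$: it peels off the bottom row of the array, sums over the $n$ entries $(A_{n,1},\dots,A_{n,n})$ using Lemma~\ref{lem:identity} (a product of negative binomial series), and recognizes the remainder as $\sum_{A'\in\A_{n-1}}w_x(A')$, which equals $1$ by the induction hypothesis. You instead observe that $w_x$ factorizes \emph{completely} over the diagonals --- which is a valid regrouping, since each entry $A_{i,k}$ lies on exactly one diagonal (the one indexed by $n-i+k$) and carries the column weight $x_k$, while the multinomial coefficients and the factors $y_m^{-(d_m+1)}$ are already diagonal-indexed --- so the sum over $\A_n$ splits into a product of $n$ independent sums, each evaluated by the multinomial theorem followed by a geometric series (convergent since $x_0>0$ makes the ratio $(y_m-x_0)/y_m<1$; rearrangement is harmless as all terms are nonnegative). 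Your argument is shorter, needs no induction and no Lemma~\ref{lem:identity}, works under the weaker hypothesis $x_0>0$, $x_i\geq 0$, and has the added virtue of exposing the probabilistic structure of $w_x$: the diagonals are independent, $d_m$ is geometric with success probability $x_0/y_m$, and conditionally on $d_m$ the diagonal entries are multinomial with probabilities proportional to $x_1,\dots,x_m$, in perfect agreement with the epoch interpretation in the paper's coupling proof. What the paper's approach buys in exchange is reusability: the same row-peeling computation is invoked verbatim in Case I of the master-equation verification for Theorem~\ref{thm:triangle-chain-statioprob} (the identity $\sum_{\tilde A\in S(A)}w_x(\tilde A)=w_x(A')$), where your global factorization does not directly apply because the top rows are held fixed there. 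Finally, your proposed shortcut via the first (probabilistic) proof of Theorem~\ref{thm:constantbinprob} is logically sound --- the sets $H_n(c)$ do partition $\A_n$ and there is no circularity --- but it would defeat the point of this lemma in the paper, which is to make the second, enriched-chain proof self-contained and independent of the coupling argument.
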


\begin{proof}
The proof proceeds by induction. When $n=1$, it is easy to check
that $\sum_{A \in \A_1} w_x(A) = 1$ because it is a geometric series.
Now we want to perform the sum for general $n$. 
Let $A'$ denote the triangular array $A$ restricted to the first $n-1$ rows.
We split the sum into two parts, 
\begin{align*}
& \sum_{A'\in \A_{n-1}}  \sum_{(A_{n,1},\dots,A_{n,n}) \in \mathbb{Z}_{\geq 0}^n} w_x(A) \\
=& \frac{x_0^{n}}{y_1 \cdots y_n}  \sum_{A'\in \A_{n-1}}  \sum_{(A_{n,1},\dots,A_{n,n}) \in \mathbb{Z}_{\geq 0}^n} 
\prod_{k=1}^n \binom{d_k(A)}{A_{n-k+1,1},  \dots , A_{n,k}}
\frac{x_k^{v_k(A)}}{y_k^{d_k(A)}}.
\end{align*}
After rewriting the multinomial
coefficient and shifting the indices, the sum above becomes
\begin{align*}
&\frac{x_0^{n}}{y_1 \cdots y_n}  \sum_{A'\in \A_{n-1}}  
\prod_{k=1}^{n-1} \binom{d_k(A') }{A_{n-k,1},  \dots , A_{n-1,k}} \frac{ x_k^{v_k(A')}}{ y_{k+1}^{d_k(A')}} \\
& \times \sum_{(A_{n,1},\dots,A_{n,n}) \in \mathbb{Z}_{\geq 0}^n} 
\prod_{k=1}^{n-1} \binom{d_{k}(A')+A_{n,k+1}}{A_{n,k+1}}
\prod_{k=1}^{n} \left( \frac{ x_k}{ y_{k}} \right)^{A_{n,k}}.
\end{align*}
We are now in a position to use Lemma~\ref{lem:identity} to evaluate the inner sum, for which 
we obtain
\[
\frac{y_n^{d_{n-1}(A')+1}}{x_0 y_1^{d_1(A')}} \; \prod_{k=2}^{n-1} y_k^{d_{k-1}(A') - d_k(A')}.
\]
After combining these factors with the outer sum, we obtain exactly $\sum_{A'\in \A_{n-1}} w_x(A')$,
which by the induction assumption is 1, completing the proof.
\end{proof}

\begin{proof}[Proof of Theorem~\ref{thm:triangle-chain-statioprob}]
Since $X^{(n)}$ is irreducible and 
positively recurrent by Lemma~\ref{lem:triangle-chain-irred}, the
stationary distribution is unique. Furthermore, $w_x$ is a probability
distribution by Lemma~\ref{lem:stat prob}.
It is therefore sufficient to verify that $w_x(A)$ satisfies the master equation, that is to say, the total
outgoing rate from $A$ is the same as the total incoming rate into $A$. 
Fix $n\geq1$. 
Since $A$ has size $n$, the total outgoing rate is always $y_n w_x(A)$.
There are two cases depending on the value of $d_n(A)$.

{\bf Case I: $d_n(A) = 0:$} 
Therefore $A_{1,1} = \cdots = A_{n,n} = 0$. 
The only incoming transitions into $A$ are with rate $x_0$ from the set
\begin{equation}
\label{TofA}
S(A) = \left\{ 
\begin{array}{ccccc}
 A_{2,1} \\
 A_{3,1} & A_{3,2} \\
 \vdots & \vdots & \ddots  \\
 A_{n,1} & A_{n,2} & \dots &  A_{n,n-1} \\
 \alpha_1 & \alpha_2 & \dots & \dots & \alpha_n
\end{array}
\: : \: (\alpha_1,\dots,\alpha_n) \in \mathbb{Z}_{\geq0}^n \right\}.
\end{equation}
Let $A'\in\A_{n-1}$ be the array obtained from $A$ by deleting its top diagonal:
\[
\begin{array}{cccc}
 A_{2,1} \\
 A_{3,1} & A_{3,2} \\
 \vdots & \vdots & \ddots  \\
 A_{n,1} & A_{n,2} & \dots &  A_{n,n-1}
\end{array}
\]
Then by the same computation as in the proof of Lemma~\ref{lem:stat prob}, we have $\sum_{\tilde{A} \in S(A)} w_x(\tilde{A})=w_x(A')$. Since the top diagonal of $A$ contains only $0$'s, we also have $w_x(A)=w_x(A')\times x_0 / y_n$.
So finally we have
\[
x_0 \sum_{\tilde{A} \in S(A)} w_x(\tilde{A}) = y_n w_x(A),
\]
which is exactly the master equation for $A$.

{\bf Case II: $d_n(A) > 0:$} 
Let $I(A) = \{i \in [1,n]: A_{i,i} > 0 \}$.  
Let $A^{(i)}$ be the same array as $A$, with the sole difference
being that $A_{i,i}$ is replaced by $A_{i,i}-1$. Note that $A^{(i)}$ makes a transition to $A$ with rate $x_i$ for each $i \in I(A)$.
Using the definition~\eqref{eq:wx} of $w_x(A)$, it is clear that
\[
x_i  w_x(A^{(i)}) = y_n w_x(A) \frac {A_{i,i}}{d_n(A)}.
\] 
Summing over all $i \in I$ and using the fact that $\sum_{i \in I(A)} A_{i,i} = d_n(A)$ gives the master equation at $A$.
\end{proof}

Theorem~\ref{thm:constantbinprob} follows as a corollary of Theorem~\ref{thm:triangle-chain-statioprob} in the case when $x_i>0$ for all $i\geq0$. Since the master equation for the chain $Z^{(n)}$ is rational in the $x_i$'s, one can actually pass to the limit when $x_i$ goes to $0$ for all $i\geq2$ and recover Theorem~\ref{thm:constantbinprob} in full generality.

\section{The $n$-ball projection of the Hilbert-Galton board}
\label{sec:finiteballprojection}

\subsection{Stationary distribution}

\begin{lem}
\label{lem:irreducibility}
The $n$-ball projection of the Hilbert-Galton board, $Y^{(n)}$, is irreducible.
\end{lem}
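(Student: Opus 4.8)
The plan is to exploit the standing assumptions $x_0>0$ and $x_1>0$, which guarantee that the transitions $T_0$ and $T_1$ always fire at positive rate. It therefore suffices to prove that the directed graph on $C_n$ whose edges are the moves $c\to T_0(c)$ and $c\to T_1(c)$ is strongly connected; I will in fact aim to show that every composition can be reached from every other in exactly $n-1$ such steps. The key idea is to choose an encoding of $C_n$ in which $T_0$ and $T_1$ become as simple as possible, so that the conclusion reads off directly.

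Concretely, I would identify a composition $(c_1,\dots,c_\ell)$ of $n$ with the binary string $g=(g_1,\dots,g_{n-1})\in\{0,1\}^{n-1}$ recording, for each of the $n-1$ gaps between consecutive balls, whether a bin boundary sits in that gap; this is the standard bijection between $C_n$ and $\{0,1\}^{n-1}$. Under this identification I claim that $T_0$ acts by $g\mapsto(1,g_1,\dots,g_{n-2})$ and $T_1$ acts by $g\mapsto(0,g_1,\dots,g_{n-2})$; that is, each move is a shift register that drops the last bit and inserts a new bit on the left, namely a $1$ for $T_0$ (the incoming leftmost ball starts its own bin) and a $0$ for $T_1$ (it joins the first bin). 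Verifying this claim is the main technical point, and the only place requiring care: one must check that the ``delete the last part when $c_\ell=1$'' convention, together with the degenerate cases $\ell=1$ and $j\ge\ell$ in which $c$ is left fixed, are all reproduced correctly by dropping the last bit of $g$. I expect this case analysis, rather than anything conceptual, to be the only real obstacle.

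Granting the shift-register description, the conclusion is immediate. Applying the moves $T_{s_1},T_{s_2},\dots,T_{s_{n-1}}$ in this temporal order to \emph{any} initial string flushes out all of its original bits and leaves the string $(s_{n-1},s_{n-2},\dots,s_1)$, independently of where one started. Hence, to reach a prescribed target $g^\ast$ one simply takes $s_i=g^\ast_{n-i}$; since each $T_{s_i}\in\{T_0,T_1\}$ has positive rate, this exhibits a positive-probability path of length $n-1$ from any state to any other, so $Y^{(n)}$ is irreducible. As a consistency check, one recovers the special fact used later in the paper that iterating $T_0$ (inserting all $1$'s) drives any composition to $(1,\dots,1)$ after $n$ steps.
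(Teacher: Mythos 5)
Your route is genuinely different from the paper's and, once patched, it works; but as written it contains one step that fails. With your own conventions ($T_0$ prepends the bit $1$, $T_1$ prepends the bit $0$), the move $T_s$ inserts the bit $1-s$, so applying $T_{s_1},\dots,T_{s_{n-1}}$ to an arbitrary initial string leaves $(1-s_{n-1},\dots,1-s_1)$, not $(s_{n-1},\dots,s_1)$. Consequently the recipe ``take $s_i=g^\ast_{n-i}$'' drives every state to the bitwise \emph{complement} of the target $g^\ast$. The fix is immediate: take $s_i=1-g^\ast_{n-i}$; equivalently, note that the map $(s_1,\dots,s_{n-1})\mapsto(1-s_{n-1},\dots,1-s_1)$ from move-words to final states is a bijection of $\{0,1\}^{n-1}$, hence onto, which is all irreducibility needs. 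Separately, the shift-register description of $T_0$ and $T_1$, which you flag as the main technical point but never actually verify, is indeed true --- the degenerate cases (deletion of the last part when $c_\ell=1$, and $T_1$ fixing $(n)$, which matches prepending a $0$ to the all-zeros string) all check out --- but in a finished proof those few lines of case analysis must be written.

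For comparison, the paper works directly on compositions: it reaches $c=(c_1,\dots,c_\ell)$ from anywhere by applying $T_0$ followed by $c_\ell-1$ copies of $T_1$, then $T_0$ followed by $c_{\ell-1}-1$ copies of $T_1$, and so on down to $c_1$; after these $n$ moves the initial configuration has been flushed out entirely. Read through your bijection, that is exactly the corrected word $s_i=1-g^\ast_{n-i}$ padded with one extra initial move, so the two arguments are at bottom the same computation. What your formulation buys is a sharper, reusable statement: restricted to the moves $\{T_0,T_1\}$, the chain $Y^{(n)}$ is a shift register, so \emph{any} $n-1$ such moves collapse all $2^{n-1}$ states to a single one --- precisely the grand-coupling phenomenon exploited later in Proposition~\ref{prop:couplingtime} --- and it shows that $n-1$ applications of $T_0$ (not just $n$) already produce $(1,\dots,1)$. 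The paper's version avoids the encoding bookkeeping and is shorter for the lemma at hand.
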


\begin{proof}
Starting from any composition $c'\in C_n$, one gets to the composition $c=(c_1,\ldots,c_\ell)\in C_n$ by the following sequence of transitions, each having positive probability: $T_0$ followed by $c_\ell-1$ times $T_1$, then $T_0$ followed by $c_{\ell-1}-1$ times $T_1$, etc, until $T_0$ followed by $c_1-1$ times $T_1$.
\end{proof}

One can couple $Y^{(n)}$ with $Z^{(n)}$ as follows. Define the maps
\begin{align*}
f_n:&(c_1,\ldots,c_n)\in\Z_{\geq0}^n\mapsto(c_1+1,\ldots,c_n+1)\in\Z_{>0}^n \\
\ell_n:&(c_1,\ldots,c_n)\in\Z_{>0}^n\mapsto
\min\left\{1\leq i \leq n \; | \; c_1+\cdots+c_i\geq n\right\} \\
g_n:&(c_1,\ldots,c_n)\in\Z_{>0}^n\mapsto (c_1,\ldots,c_{\ell_n(c)-1},n-(c_1+\cdots+c_{\ell_n(c)-1})),
\end{align*}
where in the last definition $c$ was shorthand for $(c_1,\ldots,c_n)$. Let $\tilde{Z}^{(n)}$ be the pushforward of $Z^{(n)}$ by $f_n$ and $\tilde{Y}^{(n)}$ be the pushforward of $\tilde{Z}^{(n)}$ by $g_n$. Then $\tilde{Y}^{(n)}$ has the same law as $Y^{(n)}$, so that one can use Theorem~\ref{thm:constantbinprob} to study $Y^{(n)}$. In particular, the stationary distribution $\pi_{Y^{(n)}}$ is obtained as a pushforward by $g_n\circ f_n$ of the stationary distribution $\pi_{Z^{(n)}}$.

\begin{proof}[Proof of Theorem~\ref{thm:statioprob}]
Fix $c=(\gamma_1+1,\ldots,\gamma_{\ell}+1)\in C_n$ to be a composition of $n$ of length $\ell$. Then 
\begin{align}
\pi_{Y^{(n)}}(c)&=\pi_{Y^{(n)}}(Y^{(n)}_1=\gamma_1+1,\ldots,Y^{(n)}_{\ell}=\gamma_{\ell}+1) \\
&=\pi_{Z^{(n)}}(Z^{(n)}_1=\gamma_1,\ldots,Z^{(n)}_{\ell-1}=\gamma_{\ell-1},Z^{(n)}_{\ell}\geq \gamma_{\ell}).
\end{align}
Writing
\begin{multline*}
\pi_{Y^{(n)}}(c)=\pi_{Z^{(n)}}(Z^{(n)}_1=\gamma_1,\ldots,Z^{(n)}_{\ell-1}=\gamma_{\ell-1}) \\
-\sum_{s=0}^{\gamma_{\ell}-1} \pi_{Z^{(n)}}(Z^{(n)}_1=\gamma_1,\ldots,Z^{(n)}_{\ell-1}=\gamma_{\ell-1},Z^{(n)}_{\ell}=s)
\end{multline*}
and applying formula~\eqref{eq:constantbinprob}, we obtain \eqref{eq:statioprob1}. Writing
\[
\pi_{Y^{(n)}}(c)=\sum_{s=\gamma_l}^{\infty}  \pi_{Z^{(n)}}(Z^{(n)}_1=\gamma_1,\ldots,Z^{(n)}_{\ell-1}=\gamma_{\ell-1},Z^{(n)}_{\ell}=s)
\]
and applying formula~\eqref{eq:constantbinprob}, we obtain \eqref{eq:statioprob2}.
\end{proof}

We now prove the formula for the partition function.

\begin{proof}[Proof of Corollary~\ref{cor:partitionfunction}]
Fix a composition $c=(\gamma_1+1,\ldots,\gamma_{\ell}+1)\in C_n$ and consider the formula for $\pi_{Y^{(n)}}(c)$ given by formula~\eqref{eq:statioprob1}. For any $1 \leq k \leq \ell \leq n$, denote by $\mu_k(c)$ the maximal power with which $y_k$ appears in the denominator of any of the monomials in the formula for $\pi_{Y^{(n)}}(c)$. We will show that $\mu_k(c)\leq n-k$. If $k=\ell=n$, then this means that the composition $c$ is equal to $c_n(1)$ and by Corollary~\ref{cor:monomialproba}, we have $\mu_n(c)=0$. Assume next that $k=\ell < n$. Then $y_{\ell}$ does not appear in a monomial in the first sum, thus
\begin{align*}
\mu_{\ell}(c)&\leq 1+\max_{0 \leq s \leq \gamma_{\ell}-1} \max_{A\in H_{\ell}(\gamma_1,\ldots,\gamma_{\ell-1},s)} d_k(A) \\
& \leq 1+\gamma_{\ell}-1+ \sum_{j=1}^{\ell-1} \gamma_{\ell-j} \\
& \leq n - \ell.
\end{align*}
Finally, if $1 \leq k \leq \ell-1$, then
\begin{align*}
\mu_k(c)&\leq 1+\max\left(\max_{A\in H_{\ell-1}(\gamma_1,\ldots,\gamma_{\ell-1})} d_k(A),\max_{0 \leq s \leq \gamma_{\ell}-1} \max_{A\in H_{\ell}(\gamma_1,\ldots,\gamma_{\ell-1},s)} d_k(A)\right) \\
& \leq 1+\max\left(\sum_{j=1}^k \gamma_{\ell-j},\gamma_{\ell}-1+ \sum_{j=1}^{k-1} \gamma_{\ell-j}\right) \\
& \leq 1 + n - \ell \\
& \leq n-k.
\end{align*}
On the other hand, by Corollary~\ref{cor:monomialproba}, for any $1 \leq k \leq n-1$, $c_{k+1}(n-k)$ is a composition of $n$ whose stationary probability consists of a single monomial, and the power of $y_k$ in the denominator is equal to $n-k$. We conclude that the power of $y_k$ in the least common multiple of the denominators of the stationary probabilities of compositions of length $n$ is equal to $n-k$.
\end{proof}

\begin{rem}
By analyzing the expression for $\mu_k(c)$, , 
one can show that if $c=(\gamma_1+1,\ldots,\gamma_{\ell}+1)\in C_n$:
\begin{itemize}
 \item for any $1 \leq k \leq \ell-1$,
 \[
 \mu_k(c)= \max\left(\gamma_{\ell-k}+1,\gamma_{\ell}\right) + \sum_{j=1}^{k-1} \gamma_{\ell-j} 
 \]
 \item when $k=\ell$,
 \[
 \mu_{\ell}(c)=
 \begin{cases}
  0 &\text{ if } \gamma_{\ell}=0 \\
  n- \ell &\text{ if } \gamma_{\ell}>0
 \end{cases}.
 \]
\end{itemize}
\end{rem}

\subsection{Grand coupling time}
\label{subsec:coupling}
In this subsection we prove Proposition~\ref{prop:couplingtime} giving an expression for the grand coupling time associated with a given sequence of transitions.

\begin{proof}[Proof of Proposition~\ref{prop:couplingtime}]
We say a bin is {\em recent} (resp. {\em ancient}) if it is the leftmost bin of the configuration at time $0$ or if it is created after time $0$ (resp. if it already existed at time $0$, but was not the leftmost bin at that time). It then follows that effective (resp. ineffective) elements of the sequence are those that create a ball in a recent (resp. ancient) bin. By looking at the two initial configurations $(n)$ and $(1,n-1)$, it is easy to see that one needs exactly $n-1$ transitions to couple them. 
Moreover, for any initial configuration, the leftmost bin contains at least one ball. Therefore, once one knows how $n-1$ balls have been added in recent bins, one knows the entire configuration, regardless of what the initial configuration was. 
So $n-1$ effective elements are enough to couple any pair of initial configurations.
\end{proof}

\subsection{Spectrum}
\label{subsec:spectrum}

The main idea of the proof of Theorem~\ref{thm:spectrum} is that $M_n$, when written in a different basis, becomes block upper-triangular, with one of its diagonal blocks being a diagonal matrix and the other one being $M_{n-1}$. In order to observe this, we first need to define an order on the elements of $C_n$ making it compatible with that on $C_{n-1}$.

We define the maps $L$ and $R$ from $C_n$ to $C_{n+1}$ for any $n \geq 1$ as follows. For any $c=(c_1,\ldots,c_{\ell})\in C_n$,
\begin{align*}
L(c) &:= (c_1,\ldots,c_{\ell},1) \\
R(c) &:= (c_1,\ldots,c_{\ell}+1)
\end{align*}
The maps $L$ and $R$ both add a single ball to the right of the configuration, either in a new rightmost bin or in the current rightmost bin. We also define the map $D:C_n\rightarrow C_{n-1}$ which erases the rightmost ball: for any $c=(c_1,\ldots,c_{\ell})\in C_n$,
\[
D(c):=
\begin{cases}
(c_1,\ldots,c_{\ell}-1) &\text{ if } c_{\ell}\geq 2 \\
(c_1,\ldots,c_{\ell-1}) &\text{ if } c_{\ell}=1
\end{cases}.
\]
We proceed by induction on $n \geq 1$ to assign a rank to each composition in $C_n$. We set $s_1^{(1)}:=(1)$ and for any $n \geq 2$ and $1\leq i \leq 2^{n-2}$, we set
\begin{align}
s_{2i-1}^{(n)} &:= L(s_i^{(n-1)}) \\
s_{2i}^{(n)} &:= R(s_i^{(n-1)}) 
\end{align}
These compatibility relations can be illustrated by the rooted plane binary tree on Figure~\ref{fig:tree}, where the order on $C_n$ can be read from left to right on the $n$'th row.
\begin{figure}[htbp]
\centering
\includegraphics[height=2in]{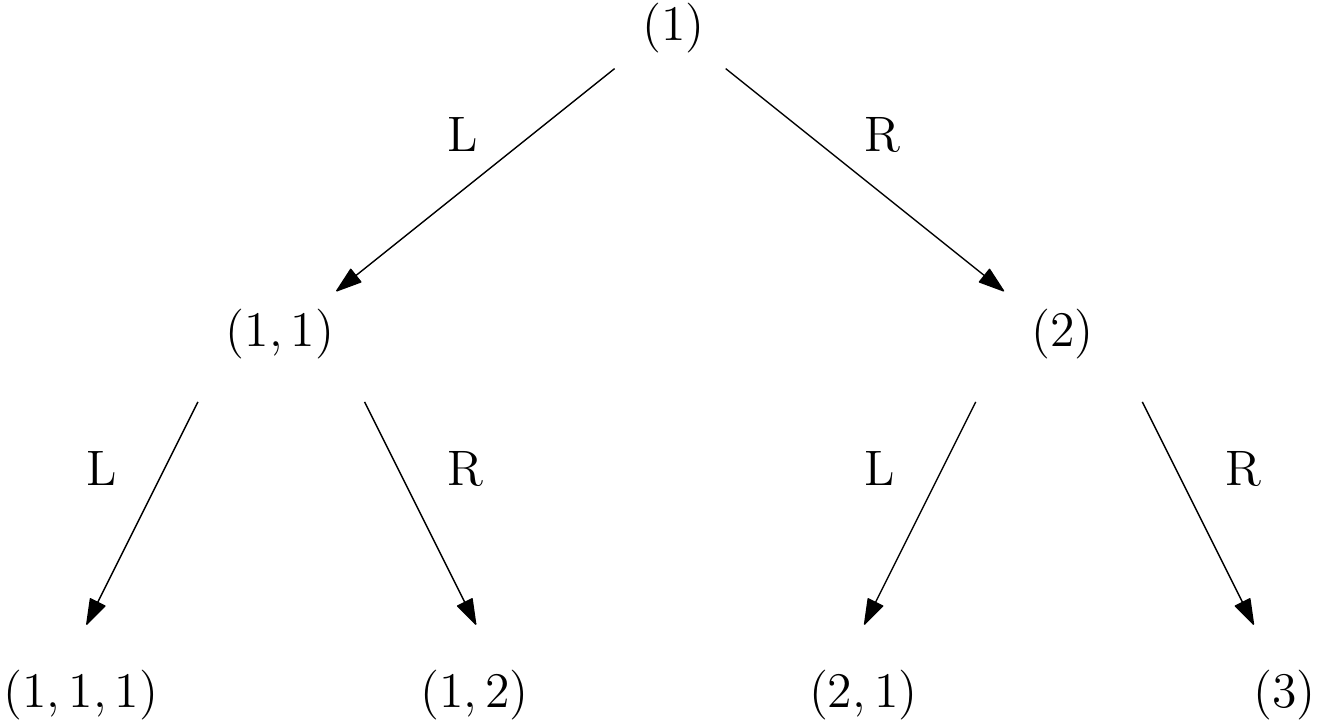}
\caption{Compatibility between the orders.}
\label{fig:tree}
\end{figure}
By construction, we have for any $n\geq 2$ and $1\leq i \leq 2^{n-2}$,
\begin{equation}
D(s_{2i-1}^{(n)}) = D(s_{2i}^{(n)})= s_i^{(n-1)}
\end{equation}
\begin{rem}
It is not hard to show that this order on $C_n$ defined inductively is actually the lexicographic order on $C_n$, where a composition $c=(c_1,\ldots,c_\ell)\in C_n$ can be seen as a word of $\ell$ letters on the alphabet $\mathbb{N}$ equipped with its usual order.
\end{rem}
We now define two bases on $\mathbb{R}^{C_n}$. The first one is the basis $\calB_n=(e(s_i^{(n)}))_{1 \leq i \leq 2^{n-1}}$, where each basis vector is indexed by a composition. The second basis is $\calB'_n=(f_1^{(n)} ,\ldots,f_{2^{n-2}}^{(n)} ,g_1^{(n)} ,\ldots,g_{2^{n-2}}^{(n)} )$ where for any $1 \leq i \leq 2^{n-2}$, we define
\begin{align}
f_i^{(n)}  &:= e(s_{2i}^{(n)}) - e(s_{2i-1}^{(n)}) \\
g_i^{(n)}  &:= e(s_{2i-1}^{(n)}) + e(s_{2i}^{(n)})
\end{align}
Let $u_n$ be the endomorphism of $\mathbb{R}^{C_n}$ whose matrix in the basis $\calB_n$ is given by $M_n$. Denote by $M'_n$ the matrix of $u_n$ in the basis $\calB'_n$. Then we can find the eigenvalues of $M_n$ by induction on $n$, using the following result.

\begin{lem}
\label{lem:blockmatrix}
For any $n \geq 2$, write
\[
M'_n=
\left(
\begin{array}{cccc}
A_{n,1} & A_{n,2} \\
A_{n,3} & A_{n,4} \\
\end{array}
\right),
\]
where the blocks $A_{n,i}$ are all square matrices of size $2^{n-2}$. Then the following holds:
\begin{enumerate}
 \item $A_{n,1}$ is a diagonal matrix, where for every $1 \leq j \leq n-1$, the entry $-y_j$ appears exactly $\binom {n-2}{j-1}$ times as a diagonal element ; \label{block1}
 \item $A_{n,3}=0$ ; \label{block3}
 \item $A_{n,4}=M_{n-1}$. \label{block4}
\end{enumerate}
\end{lem}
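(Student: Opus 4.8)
The plan is to compute the action of the endomorphism $u_n$ directly on the standard basis $\calB_n$ and then read off its components in $\calB'_n$, exploiting the recursive $L/R$ description of the ordering. Fix $1\le i\le 2^{n-2}$, write $c'=s_i^{(n-1)}\in C_{n-1}$ for the corresponding composition, of length $m$, so that $s_{2i-1}^{(n)}=L(c')$ and $s_{2i}^{(n)}=R(c')$. The first step is to list the transitions out of $L(c')$ and $R(c')$ and pair them. A short case analysis shows that $L(c')$ and $R(c')$ share all of their transitions of type $T_0$ (both landing on $(1,c'_1,\ldots,c'_m)$) and of type $T_j$ for $1\le j\le m-1$ (both landing on the same composition), while $L(c')$ has one extra transition, of type $T_m$, landing precisely on $R(c')$. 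Keeping track of the diagonal (outgoing-rate) terms, which equal $-y_m$ at $L(c')$ and $-y_{m-1}$ at $R(c')$, the shared terms cancel in the difference and one obtains $u_n(f_i^{(n)})=-y_m\, f_i^{(n)}$.

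This single identity yields parts \eqref{block1} and \eqref{block3} at once: each $f_i^{(n)}$ is an eigenvector of $u_n$ with eigenvalue $-y_m$, so $u_n$ preserves the subspace $F:=\mathrm{span}(f_1^{(n)},\ldots,f_{2^{n-2}}^{(n)})$ (giving $A_{n,3}=0$) and acts diagonally on it (giving that $A_{n,1}$ is diagonal). The diagonal entry attached to $f_i^{(n)}$ is $-y_m$, where $m$ is the length of $s_i^{(n-1)}$; since the number of compositions of $n-1$ into $j$ parts is $\binom{n-2}{j-1}$, the value $-y_j$ occurs exactly $\binom{n-2}{j-1}$ times for each $1\le j\le n-1$, as claimed.

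For part \eqref{block4} I would argue via the quotient $V/F$, where $V=\R^{C_n}$. Because $u_n$ preserves $F$, it descends to an endomorphism of $V/F$, whose matrix in the basis $(g_i^{(n)}+F)_{1\le i\le 2^{n-2}}$ is exactly $A_{n,4}$. Since $f_i^{(n)}\in F$, we have $e(L(c'))\equiv e(R(c'))\equiv \tfrac12 g_i^{(n)}\pmod F$, so the quotient map $\rho:V\to V/F$ satisfies $\rho(e(c))=\tfrac12\, g_{[D(c)]}^{(n)}$, where $[D(c)]$ denotes the rank of $D(c)$ in $C_{n-1}$. Identifying $V/F$ with $\R^{C_{n-1}}$ via $g_i^{(n)}+F\mapsto e(s_i^{(n-1)})$, it then suffices to prove the intertwining relation $\rho\circ u_n=u_{n-1}\circ\rho$. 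This reduces to the commutation statement that, for each admissible $j$, the transition $T_j$ on $C_n$ agrees after applying $D$ with $T_j$ on $C_{n-1}$, namely $D(T_j(c))=T_j(D(c))$; granting it, the $\rho$-images of the transition terms out of $c$ match those of the transition terms out of $D(c)$, which is precisely $A_{n,4}=M_{n-1}$.

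The main obstacle, and the only genuinely delicate point, is the bookkeeping of boundary cases in these commutation and pairing statements: the behaviour of $T_0$ and $T_j$ when the last part of $c$ equals $1$ or $2$, and the exceptional composition $(1,\ldots,1)$ for which $T_0$ acts trivially. In these cases a transition may be present on one side and absent on the other, or may change the length of the composition. The key observation that makes everything go through is that whenever such a discrepancy occurs, the offending transition satisfies $D(T_j(c))=D(c)$, so it contributes $0$ to $\rho\circ u_n$ and is invisible in the quotient; symmetrically, in the computation of $u_n(f_i^{(n)})$ the all-ones case $c'=(1,\ldots,1)$ produces extra $x_0$-terms at $L(c')$ that recombine to restore the clean eigenrelation $u_n(f_i^{(n)})=-y_m\, f_i^{(n)}$. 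Verifying that every boundary case collapses in exactly this way is the bulk of the work.
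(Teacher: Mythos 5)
Your proposal is correct and takes essentially the same route as the paper: the same pairing of the transitions out of $L(c')$ and $R(c')$ (with the all-ones boundary case) yields $u_n(f_i^{(n)})=-y_m f_i^{(n)}$, which gives blocks (1) and (3), and block (4) rests on the same commutation relation $D(T_j(c))=T_j(D(c))$ together with the observation that discrepant transitions vanish after applying $D$. Your quotient-space/intertwining formulation of the last step is merely a repackaging of the paper's computation of $u_n(g_i^{(n)})$ followed by projection onto the span of the $g_k^{(n)}$'s along the $f_k^{(n)}$'s.
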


\begin{proof}[Proof of Theorem~\ref{thm:spectrum}]
Assuming Lemma~\ref{lem:blockmatrix}, we have that $M'_n$ is a block upper triangular matrix. By induction, we know that eigenvalues of $M_{n-1}$ are $-y_j$ with multiplicity $\binom {n-2} {j}$ for any $1 \leq j \leq n-2$, and $0$ with multiplicity $1$. Thus, we deduce that the eigenvalues of $M'_n$ are $-y_j$ with multiplicity $\binom {n-1} {j}$ for any $1 \leq j \leq n-1$, and $0$ with multiplicity $1$. It also immediately follows from Lemma~\ref{lem:blockmatrix} that for any $n\geq3$, the matrix $M_n$ is not diagonalizable : we easily check that the matrix $M_3$ (see Example~\ref{ex:eigenvalues}) is not diagonalizable and we conclude by induction, since $M_n$ appears as a block of the block triangular matrix $M'_{n+1}$.
\end{proof}

To complete the proof of the theorem, it remains to prove Lemma~\ref{lem:blockmatrix}.

\begin{proof}[Proof of Lemma~\ref{lem:blockmatrix}]
Recall that the map $T_j:C_n \rightarrow C_n$ is given for any $0 \leq j \leq n-1$ by 
\[
T_j(c)=
\begin{cases}
(1,c_1,\dots,c_{\ell-1},c_{\ell}-1) & \text{if $j=0$}, \\
(c_1,\dots,c_{j-1},c_j+1,c_{j+1},\dots,c_{\ell-1},c_{\ell}-1) & \text{if $1 \leq j \leq \ell -1$}, \\
c & \text{if $j \geq \ell$}.
\end{cases}
\]
The action of endomorphism $u_n$ on the basis $\calB_n$ is given by
\begin{equation}
\label{eq:endo}
u_n(e(s_i^{(n)}))=\sum_{j=0}^{n-1} x_j \left(e(T_j(s_i^{(n)})) - e(s_i^{(n)}) \right)
\end{equation}
for any $1 \leq i \leq 2^{n-1}$.
Fix $1 \leq i \leq 2^{n-2}$. Denote by $\ell$ the length of the composition $s_{2i}^{(n)}$. Then by construction, the composition $s_{2i-1}^{(n)}$ has length $\ell+1$. Pick $1\leq j \leq n-1$. The following equalities hold.
\begin{align}
T_j(s_{2i-1}^{(n)}) &=T_j(s_{2i}^{(n)}) && \text{if $j \leq \ell$},\label{eq:merge} \\
T_j(s_{2i-1}^{(n)}) &= s_{2i-1}^{(n)} && \text{if $j \geq \ell+1$}, \label{eq:oddfixedpoint} \\
T_j(s_{2i}^{(n)}) &= s_{2i}^{(n)} && \text{if $j \geq \ell$}. \label{eq:evenfixedpoint}
\end{align}
Combining equations~\eqref{eq:endo}, \eqref{eq:merge}, \eqref{eq:oddfixedpoint} and  \eqref{eq:evenfixedpoint}, we deduce that
\begin{equation}
u_n(f_i^{(n)} )=-y_{\ell} f_i^{(n)} .
\end{equation}
Thus for any $1\leq i \leq 2^{n-2}$, $f_i^{(n)} $ is an eigenvector of $u_n$, and the corresponding eigenvalue is $-y_{\ell}$, where $\ell$ is the length of the composition $s_{2i}^{(n)}$. Using the map $R$ as a length-preserving bijection from $C_{n-1}$ to the subset $\left\{s_{2i}^{(n)} \right\}_{1 \leq i \leq 2^{n-2}}$ of compositions of $n$ of even rank, we deduce the number of compositions of $n$ of even rank that have length $\ell$ equals the number of compositions of $n-1$ that have length $\ell$, which is well-known to be $\binom {n-2}{\ell -1}$. Statements~\ref{block1} and~\ref{block3} of the lemma follow.

Combining equations~\eqref{eq:endo}, \eqref{eq:merge}, \eqref{eq:oddfixedpoint} and  \eqref{eq:evenfixedpoint} again, we obtain that
\begin{equation}
u_n(g_i^{(n)})=\sum_{j=0}^{\ell-1} x_j\left(2e(T_j(s_{2i}^{(n)})) - g_i^{(n)} \right) + x_{\ell}f_i^{(n)}
\end{equation}
It is not hard to see that for any $0 \leq j \leq \ell -1$,
\begin{equation}
\label{eq:commutation}
D(T_j(s_{2i}^{(n)}))=T_j(D(s_{2i}^{(n)}))=T_j(s_{i}^{(n-1)})
\end{equation}
Thus for any $0 \leq j \leq \ell -1$,
\[
T_j(s_{2i}^{(n)}) \in \left\{L(T_j(s_{i}^{(n-1)})), R(T_j(s_{i}^{(n-1)})) \right\}.
\]
If for any $0 \leq j \leq \ell -1$, we call $\tau_j(i)$ the integer such that
\[
T_j(s_{i}^{(n-1)})=s_{\tau_j(i)}^{(n-1)},
\]
then
\[
2e(T_j(s_{2i}^{(n)}))=g_{\tau_j(i)}^{(n)} \pm f_{\tau_j(i)}^{(n)}.
\]
Thus
\[
u_n(g_i^{(n)})=\sum_{j=0}^{\ell -1} x_j \left(g_{\tau_j(i)}^{(n)} - g_i^{(n)}\right) + \sum_{j=0}^{\ell -1} \pm x_j f_{\tau_j(i)}^{(n)} + x_{\ell} f_i^{(n)}
\]
Observing that the matrix $A_{n,4}$ is obtained by considering the projection of $u_n(g_i^{(n)})$ on the subspace spanned by $\left\{g_k^{(n)}\right\}_{1 \leq k \leq 2^{n-2}}$ and comparing with the fact that
\begin{align*}
u_{n-1}(e(s_i^{(n-1)})) &= \sum_{j=0}^{\ell -1} x_j \left(e(T_j(s_i^{(n-1)}))- e(s_i^{(n-1)}) \right) \\
&= \sum_{j=0}^{\ell -1} x_j \left(e(s_{\tau_j(i)}^{(n-1)}))- e(s_i^{(n-1)}) \right)
\end{align*}
we conclude that statement~\ref{block4} holds true.
\end{proof}

As an immediate consequence of Theorem~\ref{thm:spectrum}, we obtain the relaxation time
and the spectral gap.

\begin{cor}
\label{cor:spectral gap}
The spectral gap of the chain is given by $(x_0+x_1)/y_n$ and the relaxation time is $y_n/(x_0+x_1)$.
\end{cor}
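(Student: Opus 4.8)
The plan is to deduce Corollary~\ref{cor:spectral gap} directly from the spectrum computed in Theorem~\ref{thm:spectrum}, the only point demanding care being the normalization responsible for the factor $y_n$. By Theorem~\ref{thm:spectrum}, the eigenvalues of $M_n$ are $0$ (the Perron eigenvalue) together with the numbers $-y_j$ for $1\leq j\leq n-1$, with respective multiplicities $\binom{n-1}{j}$. Since $y_{k+1}=y_k+x_{k+1}\geq y_k$ for every $k$ (the rates $x_i$ being nonnegative) and $y_1=x_0+x_1>0$, the sequence $y_1\leq y_2\leq\cdots\leq y_{n-1}$ is nondecreasing, so among the nonzero eigenvalues the one of smallest modulus, equivalently of largest real part, is $-y_1=-(x_0+x_1)$.

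First I would pass to the normalized transition matrix. The generator $M_n$ is column-stochastic in the sense that its columns sum to $0$, and the total state-changing rate out of a composition of length $\ell$ is at most $y_{\ell-1}\leq y_n$; hence $P_n:=I+M_n/y_n$ is a genuine column-stochastic matrix. It is precisely the one-step transition matrix obtained by observing $Y^{(n)}$ at the jumps of the ambient chain $Z^{(n)}$, whose total rate is the constant $y_n=x_0+\cdots+x_n$ (the rate-$x_n$ transition $U_n$ of $Z^{(n)}$ projecting to a pure self-loop of $Y^{(n)}$, and so contributing nothing to $M_n$ but fixing the natural clock at $y_n$). The eigenvalues of $P_n$ are the numbers $1+\mu/y_n$ as $\mu$ ranges over the eigenvalues of $M_n$, namely $1$ and the values $1-y_j/y_n\in[0,1)$ for $1\leq j\leq n-1$. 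The largest of the latter is $1-y_1/y_n$, so the spectral gap of the chain is
\[
1-\left(1-\frac{y_1}{y_n}\right)=\frac{y_1}{y_n}=\frac{x_0+x_1}{y_n},
\]
and the relaxation time, taken as the reciprocal of the spectral gap, is $y_n/(x_0+x_1)$, which is exactly the claim.

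I do not expect a real obstacle: once Theorem~\ref{thm:spectrum} is granted, the statement is essentially a one-line consequence, and the only thing to pin down is the convention under which the continuous-time generator $M_n$ is rescaled to the stochastic matrix $P_n$ at the natural clock rate $y_n$ of $Z^{(n)}$. I would also remark in passing that, although $M_n$ fails to be diagonalizable for $n\geq3$ by Remark~\ref{rem:eigenvectors}, the nontrivial Jordan blocks only introduce polynomial-in-$t$ prefactors in the approach to stationarity; they do not alter the spectral gap or the relaxation time, which depend solely on the location of the eigenvalues and are therefore governed entirely by $-y_1$.
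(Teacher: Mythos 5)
Your proof is correct and takes essentially the same route as the paper, which states the corollary with no further argument as an immediate consequence of Theorem~\ref{thm:spectrum}: the nonzero eigenvalue of largest real part is $-y_1=-(x_0+x_1)$ by monotonicity of the $y_j$, and the factor $y_n$ comes from normalizing the generator by the total clock rate. Your explicit uniformization $P_n = I + M_n/y_n$ (justified by the bound $y_{\ell-1}\leq y_n$ on the state-changing rate) and the remark that Jordan blocks do not affect the gap simply spell out conventions the paper leaves implicit.
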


\section*{Acknowledgements}
We thank Bastien Mallein for fruitful discussions. 
The first author was partially supported by the UGC Centre for Advanced Studies. The first author also acknowledges support from the Fondation Sciences Math\'e\-matiques de Paris for a visit to the Institut Henri Poincar\'e and from DST
grant DST/INT/SWD/VR/P-01/2014. The second author acknowledges the support and hospitality of the Institut Henri Poincar\'e, where this work was initiated during the program on ``Combinatorics and interactions'', as well as the support of the Fondation Simone et Cino Del Duca.

\bibliographystyle{alpha}
\bibliography{compos}

\end{document}